\newtheorem{lemma}{Lemma}
\newtheorem{theorem}{Theorem}
\begin{document}

\title{Nonlinear Dual-Mode Control of Variable-Speed Wind Turbines with Doubly Fed Induction Generators}

\author{Choon~Yik~Tang,~\IEEEmembership{Member,~IEEE,}
        Yi~Guo,~\IEEEmembership{Student Member,~IEEE,}
        and~John~N.~Jiang,~\IEEEmembership{Senior~Member,~IEEE}
\thanks{The authors are with the School of Electrical and Computer Engineering, University of Oklahoma, Norman, OK 73019 USA (e-mail: \{cytang,yi.guo,jnjiang\}@ou.edu).}
\thanks{This work was supported by the National Science Foundation under grants ECCS-0926038 and ECCS-0955265.}}

\maketitle

\begin{abstract}
This paper presents a feedback/feedforward nonlinear controller for variable-speed wind turbines with doubly fed induction generators. By appropriately adjusting the rotor voltages and the blade pitch angle, the controller simultaneously enables: (a) control of the active power in both the {\em maximum power tracking} and {\em power regulation} modes, (b) seamless switching between the two modes, and (c) control of the reactive power so that a desirable power factor is maintained. Unlike many existing designs, the controller is developed based on original, nonlinear, electromechanically-coupled models of wind turbines, without attempting approximate linearization. Its development consists of three steps: (i) employ feedback linearization to exactly cancel some of the nonlinearities and perform arbitrary pole placement, (ii) design a speed controller that makes the rotor angular velocity track a desired reference whenever possible, and (iii) introduce a Lyapunov-like function and present a gradient-based approach for minimizing this function. The effectiveness of the controller is demonstrated through simulation of a wind turbine operating under several scenarios.
\end{abstract}

\begin{IEEEkeywords}
Wind energy, wind turbine, active power, reactive power, maximum power tracking, power regulation, nonlinear control.
\end{IEEEkeywords}

\section{Introduction}\label{sec:intro}

\IEEEPARstart{W}{ind} power is gaining ever-increasing attention in recent years as a clean, safe, and renewable energy source. With the fast growth of wind generation in power systems, wind power is becoming a significant portion of the generation portfolio in the United States as well as many countries in Europe and Asia \cite{Eriksen05}. Indeed, wind power penetration is planned to surpass 20\% of the United States' total energy production by 2030---a figure that is way beyond the current level of less than 5\% \cite{USDOE08}. Hence, to realize this vision, it is necessary to develop large-scale wind farms that effectively produce electric power from wind, and integrate them with the power systems.

The integration of large-scale wind farms into a power system, however, changes the fundamental principle of its operation, which is to maintain reliability by balancing load variation with ``controllable'' generation resources. When a portion of these resources comes from ``uncontrollable'' wind generation, that portion of the resources can hardly be guaranteed due to the intermittency of wind. As a result, the power system may fail to achieve the required balance. When the level of wind power penetration is small, this issue may be safely neglected. However, with the anticipated increase in penetration, the issue becomes critical for power system reliability. As a case in point, half of the European grids experienced a severe difficulty in 2006 because several large wind farms, operating in the {\em maximum power tracking} (MPT) mode, produced excessive power that destabilized the grids. This event took place even though the wind penetration level, at that time, was low at only 7\% \cite{UCTE06}.

Such an experience suggests that it is not advisable---and may be even disastrous---for a large-scale wind farm connected to a grid to always operate in the MPT mode, making wind turbines in the farm harvest as much wind energy as they possibly could, following the ``let it be when the wind blows'' philosophy of operation. Instead, it is highly desirable that the wind farm can also operate in the {\em power regulation} (PR) mode, whereby its total power output from the wind turbines is closely regulated at some desired setpoint, despite the fluctuating wind conditions.

The ability to operate in the PR mode in addition to the MPT mode, as well as the ability to {\em seamlessly} switch between the two, offers many important advantages: not only does the PR mode provide a cushion to absorb the impact of wind fluctuations on total power output through power regulation, it also enables a power system to effectively respond to changes in reliability conditions and economic signals. For instance, when a sudden drop in load occurs, the power system may ask the wind farm to switch from the MPT to the PR mode and generate less power, rather than rely on expensive down-regulation generation. As another example, the PR mode, when properly designed, allows the power output of a wind farm to smoothly and accurately follow system dispatch requests, thus reducing its reliance on ancillary services such as reliability reserves.

To enable large-scale wind farms to operate well in these two modes and switch seamlessly between them, numerous challenges must be overcome. This paper is devoted to addressing a subset of these challenges, by presenting an integrated framework for controlling the rotor voltages and the blade pitch angle of variable-speed wind turbines with doubly fed induction generators (DFIGs). The paper presents a feedback/feedforward nonlinear controller developed based on original, nonlinear, and electromechanically-coupled models of wind turbines, without attempting approximate linearization. The controller simultaneously enables: (a) control of the active power in both the MPT and PR modes, (b) seamless switching between the two modes, and (c) control of the reactive power so that a desirable power factor is ensured. Its development consists of three steps. First, we show that, although dynamics of a wind turbine are highly nonlinear and electromechanically coupled, they offer a structure, which makes the electrical part feedback linearizable, so that arbitrary pole placement can be carried out. Second, we show that because the electrical dynamics can be made very fast, it is possible to perform model order reduction, so that only the first-order mechanical dynamics remain to be considered. For this reduced first-order model, a speed controller is designed, which enables the rotor angular velocity to track a desired reference whenever possible. Finally, we introduce a Lyapunov-like function that measures the difference between the actual and desired powers and present a gradient-based approach for minimizing this function. The effectiveness of the controller is demonstrated through simulation of a wind turbine operating under a changing wind speed, changing desired power outputs, modeling errors, and noisy measurements.

To date, a significant amount of research has been performed on the control of variable-speed wind turbines \cite{Prats00,Iyasere08,Johnson04b,Johnson06,Calderaro07,Galdi08,Chedid00,Beltran08,Malinga03,Hand99,ZhangL08,GengH09,Muljadi01,Senjyu06,Stol01,Wright03,Wright03b,ZhangJZ08,Janssens07,Tarnowski07,Ko07,Brekken03,Marinescu04,LiDD04,ZhiDW07,WuF08, Alegria04,Pena96,Hopfensperger00,Monroy08,Peresada04,Rabelo01,LiH06,Hansen04}. The existing publications, however, are substantially different from our work in the following aspects:
\begin{enumerate}
\renewcommand{\theenumi}{(\roman{enumi})}\itemsep-\parsep
\renewcommand{\labelenumi}{\theenumi}
\item The mechanical and electrical parts of the wind turbines are considered separately in most of the current literature: \cite{Iyasere08,Malinga03,Hand99,Chedid00,Prats00,Johnson04b,Johnson06,Beltran08,Galdi08,GengH09,Muljadi01,Senjyu06,Stol01,Wright03,Wright03b,ZhangL08,ZhangJZ08,Calderaro07} considered only the mechanical part, while \cite{Ko07,Janssens07,ZhiDW07,Marinescu04,Tarnowski07,WuF08,Pena96,Monroy08,Peresada04,LiDD04,Brekken03,Alegria04,Hopfensperger00,Rabelo01,LiH06} considered only the electrical part, focusing mostly on the DFIGs. In contrast, in this paper we consider {\em both} the mechanical and electrical parts. Although \cite{Hansen04} also considered both these parts, its controller was designed to maximize wind energy conversion, as opposed to achieving power regulation (i.e., only operate in the MPT mode). In comparison, our controller can operate in {\em both} the MPT and PR modes as well as seamlessly switch between the two.
\item For those references \cite{Iyasere08,Malinga03,Hand99,Chedid00,Prats00,Johnson04b,Johnson06,Beltran08,Galdi08,GengH09,Muljadi01,Senjyu06,Stol01,Wright03,Wright03b,ZhangL08,ZhangJZ08,Calderaro07} considering only the mechanical part, control of the active power has been the main focus: \cite{Iyasere08,Prats00} maximized the wind power capture at low to medium wind speeds by adjusting both the generator torque and the blade pitch angle, \cite{Johnson04b,Johnson06,Galdi08,Calderaro07} did the same by adjusting only the generator torque, and \cite{Hand99,GengH09,Muljadi01,Senjyu06,Stol01,Wright03,Wright03b,ZhangL08,ZhangJZ08} aimed to maintain the rated rotor speed and limit the power production at high wind speeds by controlling the blade pitch angle. Although the results are interesting, there is a lack of discussion on reactive power control, which is sometimes crucial to power system reliability. On the other hand, for those references \cite{Ko07,Janssens07,ZhiDW07,Marinescu04,Tarnowski07,WuF08,Pena96,Monroy08,Peresada04,LiDD04,Brekken03,Alegria04,Hopfensperger00,Rabelo01,LiH06} considering only the electrical part, \cite{Janssens07,Tarnowski07} considered only the active power control, while \cite{Ko07,Marinescu04,LiDD04,Brekken03} dealt with the reactive power. In contrast, we provide an integrated solution to the more difficult problem of simultaneously controlling {\em both} the active and reactive powers by appropriately adjusting {\em both} the rotor voltages and the blade pitch angle.
\item From a modeling point of view, the wind turbine model we consider is one of the most comprehensive. Specifically, \cite{Malinga03,Hand99,ZhangL08} assumed a linearized model of the mechanical torque from wind turbines, while \cite{Iyasere08,Prats00,Johnson04b,Johnson06,Beltran08,Calderaro07,Galdi08,Chedid00,GengH09} considered a nonlinear one. In addition, \cite{Ko07,ZhiDW07,Marinescu04,Tarnowski07,WuF08,Pena96,LiDD04,Brekken03,Alegria04,Hopfensperger00,Rabelo01,LiH06,Hansen04} considered a reduced-order DFIG model, while \cite{Peresada04,Monroy08} considered a full-order one. In contrast, we consider a fifth-order, nonlinear, electromechanically-coupled model and attempt no linearization around some operating points.
\item Finally, from a controls point of view, the control techniques we use are different from those adopted in \cite{Prats00,Iyasere08,Johnson04b,Johnson06,Calderaro07,Galdi08,Chedid00,Beltran08,Malinga03,Hand99,ZhangL08,GengH09,Muljadi01,Senjyu06,Stol01,Wright03,Wright03b,ZhangJZ08,Janssens07,Tarnowski07,Ko07,Brekken03,Marinescu04,LiDD04,ZhiDW07,WuF08, Alegria04,Pena96,Hopfensperger00,Monroy08,Peresada04,Hansen04,LiH06,Rabelo01}. For control of the mechanical part of the wind turbines, several techniques have been considered, including proportional-integral-derivative (PID) \cite{Malinga03,Hand99,ZhangL08}, fuzzy control \cite{Chedid00,Prats00,Galdi08,ZhangJZ08,Calderaro07}, adaptive control \cite{Johnson04b,Johnson06}, robust nonlinear control \cite{Iyasere08,GengH09}, sliding mode control \cite{Beltran08}, and wind-model-based predictive control \cite{Senjyu06}. Similarly, for control of the electrical part, various ways of controlling the DFIGs have been proposed, including vector/decoupling control with or without a position encoder \cite{Pena96,Tarnowski07,Ko07,Peresada04,Brekken03,Hopfensperger00,Hansen04,LiH06,Rabelo01}, direct power control \cite{ZhiDW07}, power error vector control \cite{Alegria04}, robust coordinated control \cite{Marinescu04}, linear quadratic regulator \cite{WuF08}, nonlinear inverse system method \cite{LiDD04}, and passivity-based control \cite{Monroy08}. In comparison, the controller we propose here consists of a mixture of several linear/nonlinear control techniques, including feedback linearization, pole placement, and gradient-based optimization and involving three time scales. The reason our controller is more complex is that the problem we address is inherently more challenging (see (i) and (ii)) and the model is more comprehensive (see (i) and (iii)).
\end{enumerate}

As it follows from the above brief review of the current literature, this paper contributes significantly to the state of the art on the control of wind turbines.

The remainder of the paper is organized as follows: Section~\ref{sec:mod} describes a model of variable-speed wind turbines with DFIGs. Section~\ref{sec:contdes} introduces the proposed feedback/feedforward nonlinear controller. Simulation results are shown in Section~\ref{sec:simstud}. Finally, Section~\ref{sec:concl} concludes the paper.

\section{Modeling}\label{sec:mod}

Consider a variable-speed wind turbine consisting of a doubly fed induction generator (DFIG) and a power electronics converter, as shown in Figure~\ref{fig:wtdfig}. The DFIG may be regarded as a slip-ring induction machine, whose stator winding is directly connected to the grid, and whose rotor winding is connected to the grid through a bidirectional frequency converter using back-to-back PWM voltage-source converters.

\begin{figure}[tb]
\centering\includegraphics[width=\linewidth]{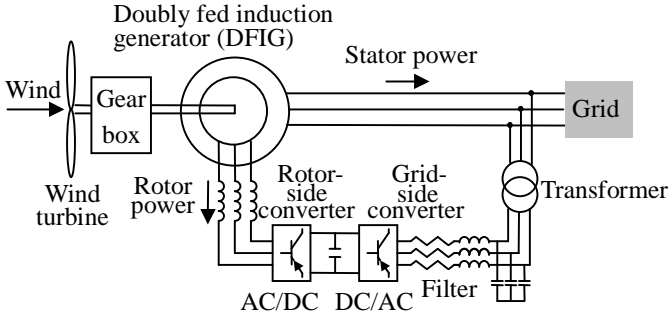}
\caption{Schematic of a typical variable-speed wind turbine with a DFIG.}
\label{fig:wtdfig}
\end{figure}

The dynamics of the electrical part of the wind turbine are represented by a fourth-order state space model, constructed using the synchronously rotating reference frame ($dq$-frame), where the relation between the three phase quantities and the $dq$ components is defined by Park's transformation\cite{Bose02}. The voltage equations are \cite{Fadaeinedjad08}
\begin{align}
v_{ds}&=R_si_{ds}-\omega_s\varphi_{qs}+\frac{d}{dt}\varphi_{ds},\label{eqn:vds}\\
v_{qs}&=R_si_{qs}+\omega_s\varphi_{ds}+\frac{d}{dt}\varphi_{qs},\label{eqn:vqs}\displaybreak[0]\\
v_{dr}&=R_ri_{dr}-(\omega_s-\omega_r)\varphi_{qr}+\frac{d}{dt}\varphi_{dr},\label{eqn:vdr}\\
v_{qr}&=R_ri_{qr}+(\omega_s-\omega_r)\varphi_{dr}+\frac{d}{dt}\varphi_{qr},\label{eqn:vqr}
\end{align}
where $v_{ds}$, $v_{qs}$, $v_{dr}$, $v_{qr}$ are the $d$- and $q$-axis of the stator and rotor voltages; $i_{ds}$, $i_{qs}$, $i_{dr}$, $i_{qr}$ are the $d$- and $q$-axis of the stator and rotor currents; $\varphi_{ds}$, $\varphi_{qs}$, $\varphi_{dr}$, $\varphi_{qr}$ are the $d$- and $q$-axis of the stator and rotor fluxes; $\omega_s$ is the constant angular velocity of the synchronously rotating reference frame; $\omega_r$ is the rotor angular velocity; and $R_s$, $R_r$ are the stator and rotor resistances. The flux equations are \cite{Fadaeinedjad08}
\begin{align}
\varphi_{ds}&=L_si_{ds}+L_mi_{dr},\label{eqn:phids}\\
\varphi_{qs}&=L_si_{qs}+L_mi_{qr},\label{eqn:phiqs}\displaybreak[0]\\
\varphi_{dr}&=L_mi_{ds}+L_ri_{dr},\label{eqn:phidr}\\
\varphi_{qr}&=L_mi_{qs}+L_ri_{qr},\label{eqn:phiqr}
\end{align}
where $L_s$, $L_r$, and $L_m$ are the stator, rotor, and mutual inductances, respectively, satisfying $L_s>L_m$ and $L_r>L_m$.
From \eqref{eqn:phids}--\eqref{eqn:phiqr}, the current equations can be written as
\begin{align}
i_{ds}&=\frac{1}{\sigma L_s}\varphi_{ds}-\frac{L_m}{\sigma L_sL_r}\varphi_{dr},\label{eqn:ids}\\
i_{qs}&=\frac{1}{\sigma L_s}\varphi_{qs}-\frac{L_m}{\sigma L_sL_r}\varphi_{qr},\label{eqn:iqs}\displaybreak[0]\\
i_{dr}&=-\frac{L_m}{\sigma L_sL_r}\varphi_{ds}+\frac{1}{\sigma L_r}\varphi_{dr},\label{eqn:idr}\\
i_{qr}&=-\frac{L_m}{\sigma L_sL_r}\varphi_{qs}+\frac{1}{\sigma L_r}\varphi_{qr},\label{eqn:iqr}
\end{align}
where $\sigma=(1-\frac{L_m^2}{L_sL_r})$ is the leak coefficient. Selecting the fluxes as state variables and substituting \eqref{eqn:ids}--\eqref{eqn:iqr} into \eqref{eqn:vds}--\eqref{eqn:vqr}, the electrical dynamics in state space form can be written as
\begin{align}
\frac{d}{dt}\varphi_{ds}&=-\frac{R_s}{\sigma L_s}\varphi_{ds}+\omega_s\varphi_{qs}+\frac{R_sL_m}{\sigma L_sL_r}\varphi_{dr} +v_{ds},\label{eqn:phids1}\\
\frac{d}{dt}\varphi_{qs}&=-\omega_s\varphi_{ds}-\frac{R_s}{\sigma L_s}\varphi_{qs}+\frac{R_sL_m}{\sigma L_sL_r}\varphi_{qr}+v_{qs},\displaybreak[0]\\
\frac{d}{dt}\varphi_{dr}&=\frac{R_rL_m}{\sigma L_sL_r}\varphi_{ds}-\frac{R_r}{\sigma L_r}\varphi_{dr}+(\omega_s-\omega_r)\varphi_{qr}+v_{dr},\label{eqn:phidr1}\\
\frac{d}{dt}\varphi_{qr}&=\frac{R_rL_m}{\sigma L_sL_r}\varphi_{qs}-(\omega_s-\omega_r)\varphi_{dr}-\frac{R_r}{\sigma L_r}\varphi_{qr}+v_{qr}.\label{eqn:phiqr1}
\end{align}
Neglecting power losses associated with the stator and rotor resistances, the active and reactive stator and rotor powers are given by \cite{LeiYZ06}
\begin{align}
P_s&=-v_{ds}i_{ds}-v_{qs}i_{qs},\label{eqn:Ps}\\
Q_s&=-v_{qs}i_{ds}+v_{ds}i_{qs},\label{eqn:Qs}\displaybreak[0]\\
P_r&=-v_{dr}i_{dr}-v_{qr}i_{qr},\label{eqn:Pr}\\
Q_r&=-v_{qr}i_{dr}+v_{dr}i_{qr},\label{eqn:Qr}
\end{align}
and the total active and reactive powers of the turbine are
\begin{align}
P&=P_s+P_r,\label{eqn:P}\\
Q&=Q_s+Q_r,\label{eqn:Q}
\end{align}
where positive (negative) values of $P$ and $Q$ mean that the turbine injects power into (draws power from) the grid.

The dynamics of the mechanical part of the wind turbine are represented by a first-order model
\begin{align}
J\frac{d}{dt}\omega_r&=T_m-T_e-C_f\omega_r\label{eqn:mech},
\end{align}
where $C_f$ is the friction coefficient, $T_m$ is the mechanical torque generated, and $T_e$ is the electromagnetic torque given by \cite{LeiYZ06}
\begin{align}
T_e=\varphi_{qs}i_{ds}-\varphi_{ds}i_{qs}\label{eqn:Te},
\end{align}
where positive (negative) values mean the turbine acts as a generator (motor). The mechanical power captured by the wind turbine is given by\cite{Bianchi07}
\begin{align}
P_m=T_m\omega_r=\frac{1}{2}\rho AC_p(\lambda,\beta)V_{w}^3,\label{eqn:Pm}
\end{align}
where $\rho$ is the air density; $A=\pi R^2$ is the area swept by the rotor blades of radius $R$; $V_{w}$ is the wind speed; and $C_p(\lambda,\beta)$ is the performance coefficient of the wind turbine, whose value is a function \cite{Bianchi07} of the tip speed ratio $\lambda$, defined as
\begin{align}
\lambda =\frac{\omega_rR}{V_{w}},\label{eqn:lambda}
\end{align}
as well as the blade pitch angle $\beta$, assumed to lie within some mechanical limits $\beta_{\min}$ and $\beta_{\max}$. This function is typically provided by turbine manufacturers and may vary greatly from one turbine to another \cite{Bianchi07}. Therefore, to make the results of this paper broadly applicable to a wide variety of turbines, no specific expression of $C_p(\lambda,\beta)$ will be assumed, until it is absolutely necessary in Section~\ref{sec:simstud}, to carry out simulations. Instead, $C_p(\lambda,\beta)$ will only be assumed to satisfy the following mild conditions:
\begin{enumerate}
\renewcommand{\theenumi}{(A\arabic{enumi})}\itemsep-\parsep
\renewcommand{\labelenumi}{\theenumi}
\item Function $C_p(\lambda,\beta)$ is continuously differentiable in both $\lambda$ and $\beta$ over $\lambda\in(0, \infty)$ and $\beta\in[\beta_{\min}, \beta_{\max}]$.
\item There exists $c\in(0, \infty)$ such that for all $\lambda\in(0, \infty)$ and $\beta \in [\beta_{\min}, \beta_{\max}]$, we have $C_p(\lambda,\beta)\leq c\lambda$. This condition is mild because it is equivalent to saying that the mechanical torque $T_m$ is bounded from above, since $T_m\propto\frac{C_p(\lambda,\beta)}{\lambda}$ according to \eqref{eqn:Pm} and \eqref{eqn:lambda}.
\item For each fixed $\beta\in[\beta_{\min}, \beta_{\max}]$, there exists $\lambda_1\in(0, \infty)$ such that for all $\lambda\in(0, \lambda_1)$, we have $C_p(\lambda, \beta)>0$. This condition is also mild because turbines are designed to capture wind power over a wide range of $\lambda$, including times when $\lambda$ is small.
\end{enumerate}

For the purpose of simulation in Section~\ref{sec:simstud}, the following model of $C_p(\lambda, \beta)$ will be assumed, which is presented in \cite{Heier98} and also adopted in MATLAB/Simulink R2007a:
\begin{align}
C_p(\lambda,\beta)=c_1\Bigl(\frac{c_2}{\lambda_i}-c_3\beta-c_4\Bigl)e^{\frac{-c_5}{\lambda_i}}+c_6\lambda,\label{eqn:cp}
\end{align}
where
\begin{align}
\frac{1}{\lambda_i}=\frac{1}{\lambda+0.08\beta}-\frac{0.035}{\beta^3+1},\label{eqn:lambdai}
\end{align}
and the coefficients are $c_1=0.5176$, $c_2=116$, $c_3=0.4$, $c_4=5$, $c_5=21$, and $c_6=0.0068$.

As it follows from the above, the wind turbine studied in this paper is described by a fifth-order nonlinear dynamical system with states $[\varphi_{ds}\; \varphi_{qs}\; \varphi_{dr}\; \varphi_{qr}\; \omega_r]^T$, controls $[v_{dr}\; v_{qr}\; \beta]^T$, outputs $[P\; Q]^T$, ``disturbance'' $V_w$, nonlinear state equations \eqref{eqn:phids1}--\eqref{eqn:phiqr1} and \eqref{eqn:mech}, and nonlinear output equations \eqref{eqn:Ps}--\eqref{eqn:Q}. Notice that the system dynamics are strongly coupled: the ``mechanical'' state variable $\omega_r$ affects the electrical dynamics bilinearly via \eqref{eqn:phidr1} and \eqref{eqn:phiqr1}, while the ``electrical'' state variables $[\varphi_{ds}\; \varphi_{qs}\; \varphi_{dr}\; \varphi_{qr}]^T$ affect the mechanical dynamics quadratically via \eqref{eqn:ids}--\eqref{eqn:iqr}, \eqref{eqn:Te} and \eqref{eqn:mech}. Since the stator winding of the DFIG is directly connected to the grid, for reliability reasons $[v_{ds}\; v_{qs}]^T$ are assumed to be fixed, i.e., not to be controlled, in the rest of this paper. Moreover, since \eqref{eqn:ids}--\eqref{eqn:iqr} represent a bijective mapping between $[\varphi_{ds}\; \varphi_{qs}\; \varphi_{dr}\; \varphi_{qr}]^T$ and $[i_{ds}\; i_{qs}\; i_{dr}\; i_{qr}]^T$ and since the currents $[i_{ds}\; i_{qs}\; i_{dr}\; i_{qr}]^T$, the rotor angular velocity $\omega_r$, and the wind speed $V_w$ can all be measured, a controller for this system has access to its entire states (i.e., full state feedback is available) and its disturbance (i.e., the wind speed $V_w$). A block diagram of this system is shown on the right-hand side of Figure~\ref{fig:wtc}.

\begin{figure*}[tb]
\centering\includegraphics[width=0.8\textwidth]{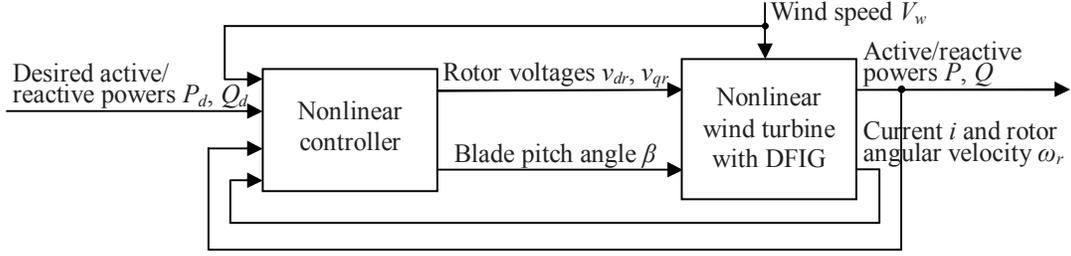}
\caption{Structure of the multivariable, feedback/feedforward nonlinear controller, developed based on original, nonlinear dynamics of the wind turbine.}
\label{fig:wtc}
\end{figure*}

\section{Controller Design}\label{sec:contdes}

In this section, a feedback/feedforward nonlinear controller of the form depicted on the left-hand side of Figure~\ref{fig:wtc} is presented. By adjusting the rotor voltages $v_{dr}$ and $v_{qr}$ and the blade pitch angle $\beta$, the controller attempts to make the active and reactive powers $P$ and $Q$ track, as closely as possible---limited only by wind strength---some desired, time-varying references $P_d$ and $Q_d$, presumably provided by a wind farm operator. When $P_d$ is set to sufficiently large, i.e., larger than what the turbine can possibly convert from wind, it means the operator wants the turbine to operate in the MPT mode; otherwise, the PR mode is sought. The value of $Q_d$, along with that of $P_d$, reflects a desired power factor $\text{PF}_d=\frac{P_d}{\sqrt{P_d^2+Q_d^2}}$ the operator wants the turbine to also maintain.

The controller development consists of three steps, which are described in Sections~\ref{ssec:feedlinepoleplace}--\ref{ssec:gradappr}, respectively.

\subsection{Feedback Linearization and Pole Placement}\label{ssec:feedlinepoleplace}

For convenience, let us introduce the variable $x=[x_1\; x_2\; x_3\; x_4]^T=[\varphi_{ds}\; \varphi_{qs}\; \varphi_{dr}\; \varphi_{qr}]^T$ and rewrite \eqref{eqn:ids}--\eqref{eqn:iqr} and \eqref{eqn:phids1}--\eqref{eqn:phiqr1} in matrix forms as follows:
\begin{align}
\dot{x}
&=\underbrace{
\begin{bmatrix}
-\frac{R_s}{\sigma L_s} & \omega_s & \frac{R_sL_m}{\sigma L_sL_r} & 0 \\
-\omega_s & -\frac{R_s}{\sigma L_s} & 0 & \frac{R_sL_m}{\sigma L_sL_r}\\
\frac{R_rL_m}{\sigma L_sL_r} & 0 & -\frac{R_r}{\sigma L_r} & \omega_s \\
0 & \frac{R_rL_m}{\sigma L_sL_r} & -\omega_s & -\frac{R_r}{\sigma L_r}
\end{bmatrix}}_{A}x\nonumber\\
&\qquad+
\begin{bmatrix}
v_{ds} \\ v_{qs} \\ v_{dr}-\omega_r x_4 \\ v_{qr}+\omega_r x_3
\end{bmatrix},\label{eqn:statespacemodeldfig}\displaybreak[0]\\
\begin{bmatrix}
i_{ds} \\ i_{qs} \\ i_{dr} \\ i_{qr}
\end{bmatrix}
&=
\begin{bmatrix}
\frac{1}{\sigma L_s} & 0 & -\frac{L_m}{\sigma L_sL_r} & 0 \\
0 & \frac{1}{\sigma L_s} & 0 & -\frac{L_m}{\sigma L_sL_r} \\
-\frac{L_m}{\sigma L_sL_r} & 0 & \frac{1}{\sigma L_r} & 0 \\
0 & -\frac{L_m}{\sigma L_sL_r} & 0 & \frac{1}{\sigma L_r}
\end{bmatrix}x,\label{eqn:output}
\end{align}
where $A$ is a constant matrix and, as was pointed out at the end of Section \ref{sec:mod}, both $v_{ds}$ and $v_{qs}$ are constants not to be controlled. Note that the only nonlinearities in \eqref{eqn:statespacemodeldfig} are the two products of the state variables, i.e., $-\omega_rx_4$ and $\omega_rx_3$. Also note that these nonlinearities appear on the same rows as the control variables $v_{dr}$ and $v_{qr}$. Thus, {\em feedback linearization} \cite{Khalil01} may be used to cancel them and subsequently perform arbitrary {\em pole placement} \cite{Chen99}, i.e., let
\begin{align}
v_{dr}&=\omega_r x_4-K_1^Tx+u_1, \label{eqn:vdr1}\\
v_{qr}&=-\omega_r x_3-K_2^Tx+u_2,\label{eqn:vqr1}
\end{align}
where $K_1, K_2 \in \mathbb{R}^4$, the first terms on the right-hand side of \eqref{eqn:vdr1} and \eqref{eqn:vqr1} are intended to cancel the nonlinearities, the second terms are for pole placement, and the third are new control variables $u_1$ and $u_2$, to be designed later.

To implement \eqref{eqn:vdr1} and \eqref{eqn:vqr1}, full state feedback on the fluxes $x$ and the rotor angular velocity $\omega_r$ are needed. While the latter is relatively easy to measure, the former is not. Fortunately, this difficulty can be circumvented by first measuring the currents---which is feasible---and then calculating the fluxes from \eqref{eqn:phids}--\eqref{eqn:phiqr}. This explains the fourth input of the {\it nonlinear controller} block in Figure~\ref{fig:wtc}.

Substituting \eqref{eqn:vdr1} and \eqref{eqn:vqr1} into \eqref{eqn:statespacemodeldfig} yields
\begin{align}
\dot{x}=(A-BK)x+\begin{bmatrix}
v_{ds}\;v_{qs}\;u_1\;u_2
\end{bmatrix}^T, \label{eqn:dotx}
\end{align}
where $B=[0_{2\times 2}\quad I_{2\times 2}]^T$ and $K=[K_1\quad K_2]^T$ is the state feedback gain matrix.
Since the electrical elements in the DFIG are physically allowed to have much faster responses than their mechanical counterparts, $K$ in \eqref{eqn:dotx} may be chosen so that $A-BK$ is asymptotically stable with very fast eigenvalues. With this choice of $K$ and with relatively slow-varying $u_1$ and $u_2$ (recall that $v_{ds}$ and $v_{qs}$ are constants), the fourth-order linear differential equation \eqref{eqn:dotx} may be approximated by the following static, linear equation:
\begin{align}
x=-(A-BK)^{-1}
\begin{bmatrix}
v_{ds}\;v_{qs}\;u_1\;u_2
\end{bmatrix}^T.\label{eqn:equilibrium}
\end{align}
As a result, the fifth-order model described in \eqref{eqn:phids1}--\eqref{eqn:phiqr1} and \eqref{eqn:mech} may be approximated by the first-order model described in \eqref{eqn:mech} along with \eqref{eqn:equilibrium}. As will be shown next, this approximation greatly simplifies the design of $u_1$ and $u_2$. Therefore, we will assume, in the sequel, that $K$ is chosen so that the electrical dynamics \eqref{eqn:dotx} are asymptotically stable and so fast that they may be approximated by \eqref{eqn:equilibrium}.

\subsection{Tracking of Desired Angular Velocity}\label{sec:asymtracdesangvel}

The second step of the controller development involves constructing a speed controller that ensures the angular velocity of the rotor, $\omega_r$, tracks a desired, time-varying reference, $\omega_{rd}$, whenever possible. The construction may be divided into four substeps as described below.

{\bf Substep 1.} First, we show that the electromagnetic torque $T_e$ defined in \eqref{eqn:Te} may be expressed as a {\em quadratic function} of the new control variables $u_1$ and $u_2$. From \eqref{eqn:Te} and \eqref{eqn:output},
\begin{align}
T_e&=\varphi_{qs}i_{ds}-\varphi_{ds}i_{qs}=
\begin{bmatrix}
x_2 & -x_1
\end{bmatrix}
\begin{bmatrix}
i_{ds} \\ i_{qs}
\end{bmatrix}\nonumber\displaybreak[0]\\
&=x^T
\begin{bmatrix}
0 & -\frac{1}{\sigma L_s} & 0 & \frac{L_m}{\sigma L_sL_r} \\
\frac{1}{\sigma L_s} & 0 & -\frac{L_m}{\sigma L_sL_r} & 0 \\
0 & 0 & 0 & 0 \\
0 & 0 & 0 & 0
\end{bmatrix}
x.\label{eqn:torquequadraticx}
\end{align}
Equation \eqref{eqn:torquequadraticx} suggests that $T_e$ is a quadratic function of $x$, while \eqref{eqn:equilibrium} suggests that $x$, in turn, is an affine function of $u_1$ and $u_2$, since $v_{ds}$ and $v_{qs}$ in \eqref{eqn:equilibrium} are assumed to be constants. Hence, $T_e$ must be a quadratic function of $u_1$ and $u_2$. Indeed, an explicit expression can be obtained as follows: since $A-BK$ is asymptotically stable and thus nonsingular, it may be written as
\begin{align}
(A-BK)^{-1}=
\begin{bmatrix}
d_{11} & d_{12} & d_{13} & d_{14} \\
d_{21} & d_{22} & d_{23} & d_{24} \\
d_{31} & d_{32} & d_{33} & d_{34} \\
d_{41} & d_{42} & d_{43} & d_{44}
\end{bmatrix},\label{eqn:convenience}
\end{align}
where each $d_{ij}$ depends on $A$, $B$, and $K$. From \eqref{eqn:equilibrium}--\eqref{eqn:convenience},
\begin{align}
T_e
&=\frac{L_m}{\sigma L_sL_r}[(d_{11}v_{ds}+d_{12}v_{qs}+d_{13}u_1+d_{14}u_2)\nonumber\\
&\qquad(d_{41}v_{ds}+d_{42}v_{qs}+d_{43}u_1+d_{44}u_2)\nonumber\\
&\qquad-(d_{21}v_{ds}+d_{22}v_{qs}+d_{23}u_1+d_{24}u_2)\nonumber\\
&\qquad(d_{31}v_{ds}+d_{32}v_{qs}+d_{33}u_1+d_{34}u_2)]\nonumber\displaybreak[0]\\
&=\begin{bmatrix}
u_1 & u_2
\end{bmatrix}
\begin{bmatrix}
q_1 & q_2 \\ q_2 & q_3
\end{bmatrix}
\begin{bmatrix}
u_1 \\ u_2
\end{bmatrix}+
\begin{bmatrix}
b_1 & b_2
\end{bmatrix}
\begin{bmatrix}
u_1 \\ u_2
\end{bmatrix}
+a,\label{eqn:Tequa}
\end{align}
where $q_1$, $q_2$, $q_3$, $b_1$, $b_2$, and $a$ are constants defined as
\begin{align}
q_1&=\frac{L_m}{\sigma L_sL_r}(d_{13}d_{43}-d_{23}d_{33}),\label{eqn:q1}\displaybreak[0]\\
q_2&=\frac{1}{2}\frac{L_m}{\sigma L_sL_r}(d_{13}d_{44}+d_{14}d_{43}-d_{23}d_{34}-d_{24}d_{33}),\label{eqn:q2}\displaybreak[0]\\
q_3&=\frac{L_m}{\sigma L_sL_r}(d_{14}d_{44}-d_{24}d_{34}),\label{eqn:q3}\displaybreak[0]\\
b_1\!&=\!\frac{L_m}{\sigma L_sL_r}\big((d_{11}v_{ds}\!\!+\!d_{12}v_{qs})d_{43}\!\!+\!d_{13}(d_{41}v_{ds}\!\!+\!d_{42}v_{qs})\nonumber\\
&\quad-(d_{21}v_{ds}+d_{22}v_{qs})d_{33}-d_{23}(d_{31}v_{ds}+d_{32}v_{qs})\big),\label{eqn:b1}\displaybreak[0]\\
b_2\!&=\!\frac{L_m}{\sigma L_sL_r}\big((d_{11}v_{ds}\!\!+\!d_{12}v_{qs})d_{44}\!\!+\!d_{14}(d_{41}v_{ds}\!\!+\!d_{42}v_{qs})\nonumber\\
&\quad-(d_{21}v_{ds}+d_{22}v_{qs})d_{34}-d_{24}(d_{31}v_{ds}+d_{32}v_{qs})\big),\label{eqn:b2}\displaybreak[0]\\
a&=\frac{L_m}{\sigma L_sL_r}\big((d_{11}v_{ds}+d_{12}v_{qs})(d_{41}v_{ds}+d_{42}v_{qs})\nonumber\\
&\quad-(d_{21}v_{ds}+d_{22}v_{qs})(d_{31}v_{ds}+d_{32}v_{qs})\big).\label{eqn:a}
\end{align}

{\bf Substep 2.} Next, we show that the quadratic function \eqref{eqn:Tequa} relating $u_1$ and $u_2$ to $T_e$ has a desirable feature: its associated Hessian matrix $\left[\begin{smallmatrix}
q_1 & q_2 \\
q_2 & q_3
\end{smallmatrix}\right]$
is always {\em positive definite}, regardless of the parameters of the electrical part of the DFIG, as well as the choice of the state feedback gain matrix $K$. The following lemma formally states and proves this assertion:

\begin{lemma} \label{lemma:posdef} The Hessian matrix
$\left[\begin{smallmatrix}
q_1 & q_2 \\
q_2 & q_3
\end{smallmatrix}\right]$ in \eqref{eqn:Tequa}
is positive definite.
\end{lemma}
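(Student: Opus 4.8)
\emph{Proposed approach.} The plan is not to grind through the six scalars $q_1,q_2,q_3,b_1,b_2,a$ directly, but to exploit the block structure of $A$. Split the flux vector of Section~\ref{ssec:feedlinepoleplace} as $x=[x_a^T\;x_b^T]^T$, where $x_a=[\varphi_{ds}\;\varphi_{qs}]^T$ are the stator fluxes and $x_b=[\varphi_{dr}\;\varphi_{qr}]^T$ the rotor fluxes, and let $J=\left[\begin{smallmatrix}0&1\\-1&0\end{smallmatrix}\right]$. First, from \eqref{eqn:torquequadraticx} one has $T_e=\frac{L_m}{\sigma L_sL_r}(\varphi_{ds}\varphi_{qr}-\varphi_{qs}\varphi_{dr})=\frac{L_m}{\sigma L_sL_r}\,x_a^TJx_b$. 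Writing $A=\left[\begin{smallmatrix}A_{11}&A_{12}\\A_{21}&A_{22}\end{smallmatrix}\right]$ in $2\times2$ blocks, the two structural facts that drive the proof are that $A_{11}=-\frac{R_s}{\sigma L_s}I+\omega_sJ$ and that the rotor-to-stator coupling block $A_{12}=\frac{R_sL_m}{\sigma L_sL_r}I$ is a \emph{nonzero scalar multiple of the identity}. Since $B=[0_{2\times2}\;I_{2\times2}]^T$, the matrix $BK$ has vanishing first two rows, so the first two component equations of the quasi-static relation \eqref{eqn:equilibrium} (equivalently of $(A-BK)x+[v_{ds}\;v_{qs}\;u_1\;u_2]^T=0$) are precisely the first two rows of $A$, namely $A_{11}x_a+A_{12}x_b+[v_{ds}\;v_{qs}]^T=0$.

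Because $A_{12}$ is invertible, the second step is to solve this for the rotor fluxes, $x_b=-A_{12}^{-1}\bigl(A_{11}x_a+[v_{ds}\;v_{qs}]^T\bigr)$, and substitute into $T_e$. Using $A_{11}=-\frac{R_s}{\sigma L_s}I+\omega_sJ$ together with the elementary identities $J^2=-I$ and $v^TJv=0$ for all $v\in\mathbb{R}^2$, one gets $x_a^TJA_{11}x_a=-\omega_s\|x_a\|^2$, while the scalar factor $\frac{L_m}{\sigma L_sL_r}A_{12}^{-1}$ collapses to $\frac{1}{R_s}I$, leaving
\begin{align*}
T_e=\frac{\omega_s}{R_s}\,\|x_a\|^2-\frac{1}{R_s}\,x_a^TJ\,[v_{ds}\;v_{qs}]^T .
\end{align*}
Thus, in the reduced (quasi-static electrical) regime, $T_e$ is a \emph{positive-definite quadratic form in the stator fluxes $x_a$} plus a term that is linear in $x_a$, with all dependence on $x_b$ eliminated.

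It remains to transfer this back to $u_1,u_2$. By \eqref{eqn:equilibrium}--\eqref{eqn:convenience}, $x_a=-D_{11}[v_{ds}\;v_{qs}]^T-D_{12}[u_1\;u_2]^T$, where $D_{12}=\left[\begin{smallmatrix}d_{13}&d_{14}\\d_{23}&d_{24}\end{smallmatrix}\right]$ is the upper-right $2\times2$ block of $(A-BK)^{-1}$; hence $x_a$ is an affine function of $[u_1\;u_2]^T$ whose linear part is $-D_{12}[u_1\;u_2]^T$. Substituting this into the displayed expression for $T_e$ and matching the quadratic part against \eqref{eqn:Tequa} yields $\left[\begin{smallmatrix}q_1&q_2\\q_2&q_3\end{smallmatrix}\right]=\frac{\omega_s}{R_s}\,D_{12}^TD_{12}$. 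Finally, $D_{12}$ is nonsingular: reading off the $(1,2)$ block of $(A-BK)(A-BK)^{-1}=I$ gives $A_{11}D_{12}+A_{12}D_{22}=0$, so $D_{12}v=0$ forces $A_{12}D_{22}v=0$, hence $D_{22}v=0$ (as $A_{12}$ is invertible), hence $(A-BK)^{-1}[0\;v^T]^T=0$, hence $v=0$ since $(A-BK)^{-1}$ is nonsingular. Therefore $D_{12}^TD_{12}\succ0$, and since $\omega_s>0$ and $R_s>0$, $\left[\begin{smallmatrix}q_1&q_2\\q_2&q_3\end{smallmatrix}\right]=\frac{\omega_s}{R_s}D_{12}^TD_{12}\succ0$.

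The step I expect to be the real obstacle is seeing that this route exists at all: noticing that the rotor-to-stator coupling block $A_{12}$ is a scalar multiple of $I$, which permits a clean elimination of the rotor fluxes, and that after this elimination $T_e$ becomes a genuinely positive-definite quadratic in the stator fluxes modulo linear terms. Once that is recognized, the remainder is routine $2\times2$ algebra with $J$ (where $J^2=-I$, $J^T=-J$, and $v^TJv=0$ do all the work) plus the short nonsingularity argument for $D_{12}$; and the only facts used about the physical parameters are the mild ones $R_s>0$, $\omega_s>0$, and $\sigma L_sL_r\neq0$, all of which hold under the standing assumptions of Section~\ref{sec:mod}.
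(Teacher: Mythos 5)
Your proof is correct, and it takes a genuinely different route from the paper's. The paper proceeds by brute force: it computes explicit closed-form expressions for $|A-BK|$, for $q_1$, and for $q_1q_3-q_2^2$ in terms of the feedback gains $k_{ij}$ and the machine parameters, and then applies Sylvester's criterion, reading off $q_1=\frac{R_sL_m^2(\Delta_1^2+\Delta_2^2)}{\Delta^2}>0$ and $q_1q_3-q_2^2=\frac{R_s^2L_m^4}{\Delta^2}>0$ from the fact that $\Delta\neq0$ (which follows from nonsingularity of $A-BK$) and that $\Delta_1,\Delta_2$ cannot vanish simultaneously. Your argument instead exploits the block structure of $A$: since $BK$ leaves the first two rows of the quasi-static equation untouched and $A_{12}=\frac{R_sL_m}{\sigma L_sL_r}I$ is an invertible scalar multiple of the identity, the rotor fluxes can be eliminated, and the identities $J^2=-I$, $x_a^TJx_a=0$ collapse $T_e$ to $\frac{\omega_s}{R_s}\|x_a\|^2$ plus an affine term, yielding the factorization $\left[\begin{smallmatrix}q_1&q_2\\q_2&q_3\end{smallmatrix}\right]=\frac{\omega_s}{R_s}D_{12}^TD_{12}$ with $D_{12}$ nonsingular (your block argument for that is valid). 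I verified the key identity: it is consistent with the paper's $q_1=\frac{\omega_s}{R_s}(d_{13}^2+d_{23}^2)$ after using $A_{11}D_{12}+A_{12}D_{22}=0$, and minimizing your expression for $T_e$ over $x_a$ even reproduces the paper's constant $a'=-\frac{v_{ds}^2+v_{qs}^2}{4\omega_sR_s}$ in \eqref{eqn:a1} for free. What your route buys is an explanation of \emph{why} the Hessian is positive definite---it is a Gram matrix scaled by $\frac{\omega_s}{R_s}$---and it makes the dependence on $\omega_s>0$ explicit, which the paper's formulas hide (they appear to be written with $\omega_s$ normalized to $1$; for $\omega_s=0$ the quadratic part genuinely degenerates, as your formula predicts). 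What the paper's route buys is explicit formulas usable elsewhere. One tiny bookkeeping point: your closing list of assumptions should also include $L_m\neq0$, since that is what makes $A_{12}$ a \emph{nonzero} multiple of the identity; this is of course guaranteed by the standing assumptions on the inductances, and the paper's own proof needs it too.
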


\begin{proof}
From \eqref{eqn:statespacemodeldfig}, \eqref{eqn:dotx}, \eqref{eqn:convenience}, \eqref{eqn:q1}, \eqref{eqn:q2}, and \eqref{eqn:q3}, the determinant of $A-BK$ and the leading principal minors $q_1$ and $q_1q_3-q_2^2$ of $\left[\begin{smallmatrix}
q_1 & q_2 \\
q_2 & q_3
\end{smallmatrix}\right]$
can be written as
\begin{align}
|A-BK|&=\frac{\Delta}{(L_sL_r-L_m^2)^2}, \label{eqn:det(A-BK)}\displaybreak[0]\\
q_1&=\frac{R_sL_m^2(\Delta_1^2+\Delta_2^2)}{\Delta^2}, \label{eqn:q11}\displaybreak[0]\\
q_1q_3-q_2^2&=\frac{R_s^2L_m^4}{\Delta^2}, \label{eqn:q4}
\end{align}
where
\begin{align}
\Delta&=\big(-R_sL_mk_{12}+(L_sL_r-L_m^2)k_{13}-R_sL_rk_{14}\nonumber\\
&\qquad+R_rL_s+R_sL_r\big)\Delta_1 \nonumber\\
&\quad+\big(R_sL_mk_{11}+R_sL_rk_{13}+(L_sL_r-L_m^2)k_{14}\nonumber\\
&\qquad+R_sR_r-L_sL_r+L_m^2\big)\Delta_2, \label{eqn:delta}\displaybreak[0]\\
\Delta_1&=R_sL_mk_{21}+R_sL_rk_{23}+(L_sL_r-L_m^2)k_{24}\nonumber\\
&\qquad+R_rL_s+R_sL_r,\nonumber\displaybreak[0]\\
\Delta_2&=R_sL_mk_{22}+(-L_sL_r+L_m^2)k_{23}+R_sL_rk_{24}\nonumber\\
&\qquad+R_sR_r-L_sL_r+L_m^2,\nonumber
\end{align}
and $k_{ij}$ is the $ij$ entry of $K$. Since $A-BK$ is nonsingular, $L_s>L_m$, and $L_r>L_m$, it follows from \eqref{eqn:det(A-BK)} that $\Delta\neq0$. Since $\Delta\neq0$, it follows from \eqref{eqn:q4} that $q_1q_3-q_2^2>0$ and from \eqref{eqn:delta} that $\Delta_1$ and $\Delta_2$ cannot be zero simultaneously. The latter, along with \eqref{eqn:q11}, implies that $q_1>0$. Since $q_1>0$ and $q_1q_3-q_2^2>0$, $\left[\begin{smallmatrix}
q_1 & q_2 \\
q_2 & q_3
\end{smallmatrix}\right]$ in \eqref{eqn:Tequa} must be positive definite.
\end{proof}

{\bf Substep 3.} Next, we show that there is a {\em redundancy} in the control variables $u_1$ and $u_2$, which may be exposed via a {\em coordinate change}. Observe from \eqref{eqn:mech} that the first-order dynamics of $\omega_r$ are driven by $T_e$. Also observe from Substeps~1 and~2 that $T_e$ is a convex quadratic function of $u_1$ and $u_2$. Thus, we have {\em two} coupled control inputs (i.e., $u_1$ and $u_2$) affecting {\em one} state variable (i.e., $\omega_r$), implying that there is a redundancy in the control inputs, which may be exploited elsewhere (to be discussed in Section~\ref{ssec:gradappr}). To expose this redundancy, first notice that because the Hessian matrix $\left[\begin{smallmatrix}
q_1 & q_2 \\
q_2 & q_3
\end{smallmatrix}\right]$ is positive definite, it can be diagonalized, i.e., there exist an orthogonal matrix $M$ containing its eigenvectors and a diagonal matrix $D$ containing its eigenvalues, such that
\begin{align}
M^T
\begin{bmatrix}
q_1 & q_2 \\
q_2 & q_3
\end{bmatrix}M=D.\label{eqn:diagonalization}
\end{align}
Indeed,
\begin{align*}
M=\begin{bmatrix}
\frac{q_2}{\sqrt{q_2^2+(\lambda_1-q_1)^2}} & \frac{\lambda_2-q_3}{\sqrt{(\lambda_2-q_3)^2+q_2^2}}\\
\frac{\lambda_1-q_1}{\sqrt{q_2^2+(\lambda_1-q_1)^2}} & \frac{q_2}{\sqrt{(\lambda_2-q_3)^2+q_2^2}}
\end{bmatrix},
\quad D=\begin{bmatrix}
\lambda_1 & 0\\
0 & \lambda_2
\end{bmatrix},
\end{align*}
where \begin{align*}
\lambda_{1,2}=\frac{q_1+q_3\pm\sqrt{(q_1+q_3)^2-4(q_1q_3-q_2^2)}}{2}.
\end{align*}
Next, consider the following coordinate change, which transforms $u_1 \in \mathbb{R}$ and $u_2 \in \mathbb{R}$ in a Cartesian coordinate system into $r \geq 0$ and $\theta \in [0, 2\pi)$ in a polar coordinate system:
\begin{align}
r=\sqrt{z^Tz},\quad\theta=\tan^{-1}\big(\frac{z_2}{z_1}\big),\label{eqn:ztheta}
\end{align}
where
\begin{align}
z&=
\begin{bmatrix}
z_1 \\ z_2
\end{bmatrix}=D^{1/2}M^T\begin{bmatrix}
u_1 \\ u_2 \end{bmatrix}+\frac{1}{2}D^{-1/2}M^T\begin{bmatrix}
b_1 \\ b_2 \end{bmatrix}. \label{eqn:zfunu}
\end{align}
In terms of the new coordinates $r$ and $\theta$, it follows from \eqref{eqn:Tequa} and \eqref{eqn:diagonalization}--\eqref{eqn:zfunu} that
\begin{align}
T_e&=r^2+a',\label{eqn:Techange2}
\end{align}
where
\begin{align*}
a'&=a-\frac{1}{4}
\begin{bmatrix} b_1 & b_2 \end{bmatrix}
\begin{bmatrix}
q_1 & q_2 \\
q_2 & q_3
\end{bmatrix}^{-1}
\begin{bmatrix} b_1 \\ b_2 \end{bmatrix}.
\end{align*}
Using \eqref{eqn:q1}--\eqref{eqn:a}, $a'$ may be simplified to
\begin{align}
a'=-\frac{v_{ds}^2+v_{qs}^2}{4\omega_sR_s},\label{eqn:a1}
\end{align}
implying that it is always negative. Comparing \eqref{eqn:Techange2} with \eqref{eqn:Tequa} shows that the coordinate change \eqref{eqn:ztheta} and \eqref{eqn:zfunu} allows us to decouple the control variables, so that in the new coordinates, $r$ is responsible for driving the first-order dynamics of $\omega_r$ through $T_e$ of \eqref{eqn:Techange2}, while $\theta$ does not at all affect $\omega_r$ (and, hence, is redundant as far as the dynamics of $\omega_r$ are concerned). The design of $r$ and $\theta$ will be discussed in Substep 4 and Section \ref{ssec:gradappr}, respectively.

{\bf Substep 4.} Finally, a {\em speed controller} is presented, which ensures that the rotor angular velocity $\omega_r$ tracks a desired time-varying reference $\omega_{rd}$, to be determined in Section~\ref{ssec:gradappr}, provided that $\omega_{rd}$ is not exceedingly large. Combining \eqref{eqn:mech} and \eqref{eqn:Techange2} yields
\begin{align}
J\dot{\omega}_r=T_m(\omega_r, \beta, V_w)-r^2-a'-C_f\omega_r, \label{eqn:mechTechange}
\end{align}
where, according to \eqref{eqn:Pm},
\begin{align}
T_m(\omega_r, \beta, V_w)=\frac{\frac{1}{2}\rho AC_p(\lambda, \beta)V_w^3}{\omega_r}. \label{eqn:Tm}
\end{align}
Here, $T_m$ is written as $T_m(\omega_r, \beta, V_w)$ to emphasize its dependence on $\omega_r$, $\beta$, and $V_w$. Observe from \eqref{eqn:mechTechange} that, if the control input $r^2$ were {\em real-valued} instead of being nonnegative, feedback linearization may be applied to cancel all the terms on the right-hand side of \eqref{eqn:mechTechange} and insert linear dynamics $\alpha (\omega_r-\omega_{rd})$, i.e., we may let
\begin{align}
r^2=T_m(\omega_r, \beta, V_w)-a'-C_f\omega_r+\alpha(\omega_r-\omega_{rd}),\label{eqn:r2imaginary}
\end{align}
so that
\begin{align}
J\dot{\omega}_r=-\alpha(\omega_r-\omega_{rd}).\label{eqn:lindyn}
\end{align}
By letting the controller parameter $\alpha$ be positive, \eqref{eqn:lindyn} implies that $\omega_r$ always attempts to go to $\omega_{rd}$. Unfortunately, because $r^2$ cannot be negative, the speed controller \eqref{eqn:r2imaginary}---and, hence, the linear dynamics \eqref{eqn:lindyn}---cannot be realized whenever the right-hand side of \eqref{eqn:r2imaginary} is negative. To alleviate this issue, \eqref{eqn:r2imaginary} is slightly modified by setting $r^2$ to zero whenever that occurs, i.e.,
\begin{align}
r^2\!=\!\max\{T_m(\omega_r, \beta, V_w)\!-\!a'\!-\!C_f\omega_r\!+\!\alpha (\omega_r\!-\!\omega_{rd}), 0\}.\label{eqn:r}
\end{align}
Notice that \eqref{eqn:r} contains a feedforward action involving the ``disturbance'', i.e., the wind speed $V_w$. This explains the first input of the {\it nonlinear controller} block in Figure~\ref{fig:wtc}.

To analyze the behavior of the speed controller \eqref{eqn:r}, suppose $\omega_{rd}$, $\beta$, and $V_w$ are constants and consider the function $g$, defined as
\begin{align}
g(\omega_r, \beta, V_w)=T_m(\omega_r, \beta, V_w)-a'-C_f\omega_r.\label{eqn:g}
\end{align}
The following lemma says that $g(\omega_r, \beta, V_w)$, when viewed as a function of $\omega_r$, has a positive root $\omega_r^{(1)}$, below which $g(\omega_r, \beta, V_w)$ is positive:

\begin{lemma}\label{lemma:omegar1}
For each fixed $\beta\in[\beta_{\min}, \beta_{\max}]$ and $V_w>0$, there exists $\omega_r^{(1)}\in(0, \infty)$ such that $g(\omega_r^{(1)},\beta,V_w)=0$ and $g(\omega_r,\beta,V_w)>0$ for all $\omega_r\in(0,\omega_r^{(1)})$.
\end{lemma}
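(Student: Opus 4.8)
The plan is to treat $g(\cdot,\beta,V_w)$ as a continuous scalar function on $(0,\infty)$, show that it is strictly positive for all sufficiently small $\omega_r>0$ and strictly negative for all sufficiently large $\omega_r$, and then take $\omega_r^{(1)}$ to be its \emph{first} positive zero. Continuity is immediate from the hypotheses: by (A1), $C_p(\lambda,\beta)$ is continuous in $\lambda\in(0,\infty)$, and since $\lambda=\omega_rR/V_w$ is a continuous, positivity-preserving function of $\omega_r\in(0,\infty)$, the map $\omega_r\mapsto C_p(\omega_rR/V_w,\beta)$ is continuous on $(0,\infty)$; hence so are $T_m(\omega_r,\beta,V_w)=\frac{1}{2}\rho A C_p(\lambda,\beta)V_w^3/\omega_r$ and $g=T_m-a'-C_f\omega_r$ in \eqref{eqn:g}.

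For the behaviour near zero I would invoke (A3): there is $\lambda_1>0$ with $C_p(\lambda,\beta)>0$ for $\lambda\in(0,\lambda_1)$, i.e., $T_m(\omega_r,\beta,V_w)>0$ for $\omega_r\in(0,\lambda_1V_w/R)$. Since $a'<0$ by \eqref{eqn:a1}, on that interval $g(\omega_r,\beta,V_w)>-a'-C_f\omega_r$, so with $\varepsilon:=\min\{\lambda_1V_w/R,\,-a'/C_f\}>0$ we get $g(\omega_r,\beta,V_w)>0$ for every $\omega_r\in(0,\varepsilon)$. For large $\omega_r$ I would invoke (A2): $C_p(\lambda,\beta)\le c\lambda$ gives $T_m(\omega_r,\beta,V_w)\le\frac{1}{2}\rho A cRV_w^2=:\bar T_m$, a constant, whence $g(\omega_r,\beta,V_w)\le\bar T_m-a'-C_f\omega_r\to-\infty$ as $\omega_r\to\infty$ (using $C_f>0$); in particular $g(\bar\omega,\beta,V_w)<0$ for some $\bar\omega>\varepsilon$.

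I would then set $\omega_r^{(1)}:=\inf\{\omega_r>0:g(\omega_r,\beta,V_w)\le0\}$. This set contains $\bar\omega$, so $\omega_r^{(1)}<\infty$, and it is disjoint from $(0,\varepsilon)$, so $\omega_r^{(1)}\ge\varepsilon>0$. By the definition of the infimum, $g(\omega_r,\beta,V_w)>0$ for every $\omega_r\in(0,\omega_r^{(1)})$; and continuity of $g$ at $\omega_r^{(1)}$ forces $g(\omega_r^{(1)},\beta,V_w)=0$, since it is simultaneously a left-hand limit of strictly positive values and a limit of nonpositive values taken within the set. This delivers the required $\omega_r^{(1)}\in(0,\infty)$.

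None of these steps is deep; the only one needing real care is the limit $\omega_r\to0^{+}$, where $T_m$ is a ratio whose numerator $C_p$ and denominator $\omega_r$ can both tend to $0$. Assumption (A3) is exactly what removes the pathological cases, because it guarantees $T_m>0$ on a neighbourhood of $0$ (rather than merely $C_p\to0$), after which the strict negativity of $a'$ from \eqref{eqn:a1} finishes the near-zero estimate. The large-$\omega_r$ step is routine given the torque bound (A2) and the (physically natural) positivity of the friction coefficient $C_f$, and the remainder is the intermediate value theorem together with a standard infimum-of-zeros argument.
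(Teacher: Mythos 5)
Your proposal is correct and follows essentially the same route as the paper's proof: positivity of $g$ near $\omega_r=0$ from (A3) and $a'<0$, negativity for large $\omega_r$ from (A2) (with $C_f>0$), and continuity plus the Intermediate Value Theorem to locate a first root. Your infimum-plus-continuity argument merely makes rigorous the paper's shorthand of ``letting $\omega_r^{(1)}$ be the first of such roots.''
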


\begin{proof}
Due to the fact that $a'$ in \eqref{eqn:a1} is negative, there exists $\omega_{r,1}$ such that $-a'-C_f\omega_r>0$ for all $\omega_r\in(0, \omega_{r,1})$. Due to Assumption (A3) of Section~\ref{sec:mod}, \eqref{eqn:lambda}, and \eqref{eqn:Tm}, there exists $\omega_{r,2}$ such that $T_m(\omega_r, \beta, V_w)>0$ for all $\omega_r\in(0, \omega_{r,2})$. Hence, from \eqref{eqn:g}, we have $g(\omega_r, \beta, V_w)>0$ for all $\omega_r\in(0, \min\{\omega_{r,1}, \omega_{r,2}\})$. In addition, due to Assumption (A2), \eqref{eqn:lambda}, \eqref{eqn:Tm}, \eqref{eqn:g}, and $a'<0$, there exists $\omega_{r,3}$, sufficiently large, such that $g(\omega_{r,3}, \beta, V_w)<0$. These two properties of $g$, along with Assumption (A1) and the Intermediate Value Theorem, imply that there exists at least one positive root $\omega_r$ satisfying $g(\omega_r, \beta, V_w)=0$. Letting $\omega_r^{(1)}$ be the first of such roots completes the proof.
\end{proof}

The following theorem, derived based on Lemma~\ref{lemma:omegar1}, says that as long as the desired rotor angular velocity $\omega_{rd}$ is not exceedingly large, i.e., does not exceed the first root $\omega_r^{(1)}$ of $g(\omega_r, \beta, V_w)$, the closed-loop dynamics \eqref{eqn:mechTechange} and \eqref{eqn:r} have an asymptotically stable equilibrium point at $\omega_{rd}$:

\begin{theorem} \label{thm:speedctrl} Consider the first-order dynamics \eqref{eqn:mechTechange} and the speed controller \eqref{eqn:r}. Suppose $\omega_{rd}$, $\beta$, and $V_w$ are constants, with $\omega_{rd}$ satisfying $0<\omega_{rd}<\omega_r^{(1)}$. Then, for all $\omega_r(0)>0$, $\lim_{t \rightarrow \infty}\omega_r(t)=\omega_{rd}$.
\end{theorem}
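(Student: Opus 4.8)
The plan is to substitute the speed controller \eqref{eqn:r} into the reduced dynamics \eqref{eqn:mechTechange} and reduce the assertion to a phase-line analysis of a scalar autonomous ODE. Writing $h(\omega_r):=g(\omega_r,\beta,V_w)+\alpha(\omega_r-\omega_{rd})$ with $g$ as in \eqref{eqn:g}, the controller \eqref{eqn:r} reads $r^2=\max\{h(\omega_r),0\}$, so \eqref{eqn:mechTechange} becomes $J\dot\omega_r=f(\omega_r)$ with $f(\omega_r):=g(\omega_r,\beta,V_w)-\max\{h(\omega_r),0\}$; since $J>0$, the sign of $\dot\omega_r$ is that of $f$. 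On the region $h(\omega_r)\ge 0$ this is the feedback-linearized law $f(\omega_r)=-\alpha(\omega_r-\omega_{rd})$ of \eqref{eqn:lindyn}, while on $h(\omega_r)<0$ it is $f(\omega_r)=g(\omega_r,\beta,V_w)$.

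First I would dispatch well-posedness. By Assumption (A1), $\lambda\mapsto C_p(\lambda,\beta)$ is $C^1$ on $(0,\infty)$, hence $T_m(\cdot,\beta,V_w)$ of \eqref{eqn:Tm}, and therefore $g(\cdot,\beta,V_w)$ and $f$, are locally Lipschitz on $(0,\infty)$; thus solutions exist locally and are unique, and in particular $\omega_r\equiv\omega_{rd}$ is the only solution through $\omega_{rd}$.

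The heart of the proof is the sign trichotomy for $f$ on $(0,\infty)$, which I would establish as follows. At $\omega_r=\omega_{rd}$: since $0<\omega_{rd}<\omega_r^{(1)}$, Lemma~\ref{lemma:omegar1} gives $g(\omega_{rd},\beta,V_w)>0$, so $h(\omega_{rd})=g(\omega_{rd},\beta,V_w)>0$ and $f(\omega_{rd})=-\alpha(\omega_{rd}-\omega_{rd})=0$, i.e. $\omega_{rd}$ is an equilibrium. On $(0,\omega_{rd})$: Lemma~\ref{lemma:omegar1} gives $g(\omega_r,\beta,V_w)>0$ (as $0<\omega_r<\omega_r^{(1)}$), so on the branch $h\ge 0$ one has $f(\omega_r)=\alpha(\omega_{rd}-\omega_r)>0$ and on the branch $h<0$ one has $f(\omega_r)=g(\omega_r,\beta,V_w)>0$; hence $f>0$ on $(0,\omega_{rd})$. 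On $(\omega_{rd},\infty)$: on the branch $h\ge 0$, $f(\omega_r)=-\alpha(\omega_r-\omega_{rd})<0$; on the branch $h<0$, the switching condition $h(\omega_r)<0$ itself forces $g(\omega_r,\beta,V_w)<-\alpha(\omega_r-\omega_{rd})<0$, so again $f(\omega_r)=g(\omega_r,\beta,V_w)<0$; hence $f<0$ on $(\omega_{rd},\infty)$. The payoff of the coordinate change yielding $T_e=r^2+a'$ and of Lemma~\ref{lemma:omegar1} is precisely that this trichotomy holds with no information on $g$ beyond its first root.

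Finally I would close with the standard monotone-convergence argument for a scalar ODE with this sign structure. If $\omega_r(0)=\omega_{rd}$ the solution is constant. If $\omega_r(0)\in(0,\omega_{rd})$, uniqueness forbids the trajectory from reaching $\omega_{rd}$ in finite time, so $\omega_r(t)\in(0,\omega_{rd})$ and $\dot\omega_r(t)>0$ for all $t\ge 0$; being increasing and bounded above, $\omega_r(\cdot)$ stays in a compact subinterval of $(0,\infty)$ (so it is defined for all $t\ge 0$) and tends to some $L\in(0,\omega_{rd}]$, and continuity of $f$ forces $f(L)=0$, whence $L=\omega_{rd}$. The case $\omega_r(0)>\omega_{rd}$ is symmetric: $\omega_r(\cdot)$ is decreasing and bounded below by $\omega_{rd}>0$, hence converges, necessarily to $\omega_{rd}$. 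I expect the main obstacle to be the argument on $(\omega_{rd},\infty)$: one cannot in general claim $g<0$ there, so the right move is to observe that on the only branch where $f=g$ the switching condition itself supplies $g<0$; the other delicate point is the Lipschitz/uniqueness step, which rules out a trajectory touching $\omega_{rd}$—or escaping through it—in finite time and makes the monotone-convergence conclusion airtight.
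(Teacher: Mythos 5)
Your proposal is correct and follows essentially the same route as the paper: substituting \eqref{eqn:r} into \eqref{eqn:mechTechange} yields $J\dot{\omega}_r=\min\{\alpha(\omega_{rd}-\omega_r),\,g(\omega_r,\beta,V_w)\}$, which is exactly your $f$, and your sign trichotomy on $(0,\omega_{rd})$, $\{\omega_{rd}\}$, and $(\omega_{rd},\infty)$ derived from Lemma~\ref{lemma:omegar1} is precisely the paper's equilibrium and sign analysis. The only difference is in how the conclusion is drawn from that trichotomy---the paper invokes the Lyapunov function $\frac{1}{2}(\omega_r-\omega_{rd})^2$ with $\dot{V}$ negative definite, while you use monotone convergence plus local Lipschitz uniqueness---and your version is, if anything, slightly more careful about well-posedness and global convergence on the domain $(0,\infty)$.
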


\begin{proof}
Substituting \eqref{eqn:r} into \eqref{eqn:mechTechange} and using \eqref{eqn:g} yield
\begin{align}
J\dot{\omega}_r&=\min\{\alpha (\omega_{rd}-\omega_r),g(\omega_r, \beta, V_w)\}.\label{eqn:mechctrl}
\end{align}
Suppose $0<\omega_{rd}<\omega_r^{(1)}$. We first show that $\omega_r=\omega_{rd}$ is the unique equilibrium point of \eqref{eqn:mechctrl}. Suppose $\omega_r=\omega_{rd}$. Then, $\alpha(\omega_{rd}-\omega_r)$ in \eqref{eqn:mechctrl} is zero, whereas $g(\omega_r, \beta, V_w)$ in \eqref{eqn:mechctrl} is positive, due to Lemma~\ref{lemma:omegar1}. Thus, $\dot{\omega}_r=0$, implying that $\omega_{rd}$ is an equilibrium point. Next, suppose $0<\omega_r<\omega_{rd}$. Then, $\alpha(\omega_{rd}-\omega_r)$ is positive, and so is $g(\omega_r, \beta, V_w)$, due again to Lemma~\ref{lemma:omegar1}. Hence, $\dot{\omega}_r>0$, implying that there is no equilibrium point to the left of $\omega_{rd}$. Finally, suppose $\omega_r>\omega_{rd}$. Then, $\alpha(\omega_{rd}-\omega_r)$ is negative. Therefore, $\dot{\omega}_r<0$, implying that there is no equilibrium point to the right of $\omega_{rd}$. From the above analysis, we see that $\omega_r=\omega_{rd}$ is the unique equilibrium point of \eqref{eqn:mechctrl}. Next, we show that the equilibrium point $\omega_r=\omega_{rd}$ is asymptotically stable in that for all $\omega_r(0)>0$, $\lim_{t\rightarrow\infty}\omega_r(t)=\omega_{rd}$. Consider a quadratic Lyapunov function candidate $V:(0, \infty)\rightarrow\mathbb{R}$, defined as
\begin{align}
V(\omega_r)=\frac{1}{2}(\omega_r-\omega_{rd})^2,\label{eqn:LyapuStab}
\end{align}
which is positive definite with respect to the shifted origin $\omega_r=\omega_{rd}$. From \eqref{eqn:mechctrl} and \eqref{eqn:LyapuStab},
\begin{align}
\dot{V}(\omega_r)=\frac{1}{J}(\omega_r-\omega_{rd})\min\{\alpha (\omega_{rd}-\omega_r),g(\omega_r, \beta, V_w)\}. \label{eqn:Vdot}
\end{align}
Note that whenever $0<\omega_r<\omega_{rd}$, $\alpha(\omega_{rd}-\omega_r)>0$ and $g(\omega_r, \beta, V_w)>0$, so that $\dot{V}(\omega_r)<0$ according to \eqref{eqn:Vdot}. On the other hand, whenever $\omega_r>\omega_{rd}$, $\alpha(\omega_{rd}-\omega_r)<0$, so that $\dot{V}(\omega_r)<0$. Finally, when $\omega_r=\omega_{rd}$, $\dot{V}(\omega_r)=0$. Therefore, $\dot{V}(\omega_r)$ is negative definite with respect to the shifted origin $\omega_r=\omega_{rd}$. It follows from \cite{Khalil01} that $\omega_r=\omega_{rd}$ is asymptotically stable, i.e., for all $\omega_r(0)>0$, $\lim_{t \rightarrow \infty}\omega_r(t)=\omega_{rd}$.
\end{proof}

Theorem~\ref{thm:speedctrl} says that the first root $\omega_r^{(1)}$ is a {\em critical root}, for which $\omega_{rd}$ should never exceed, if we want $\omega_r(t)$ to go to $\omega_{rd}$ regardless of $\omega_r(0)$. Figure~\ref{fig:wr1} shows, for the MATLAB/Simulink R2007a model of $C_p(\lambda,\beta)$ given in \eqref{eqn:cp} and \eqref{eqn:lambdai}, how the critical root $\omega_r^{(1)}$ depends on $\beta$ and $V_w$. Notice from the figure that $\omega_r^{(1)}$ is insensitive to $\beta$ but proportional to $V_w$, meaning that the larger the wind speed, the higher the ``ceiling'' on the desired rotor angular velocity. Also notice that $\omega_r^{(1)}$ of more than 3500 in the per-unit system is extremely large, meaning that for this particular turbine there is no need to be concerned about $\omega_{rd}$ exceeding $\omega_r^{(1)}$.

\begin{figure}[tb]
\centering\includegraphics[width=0.95\linewidth]{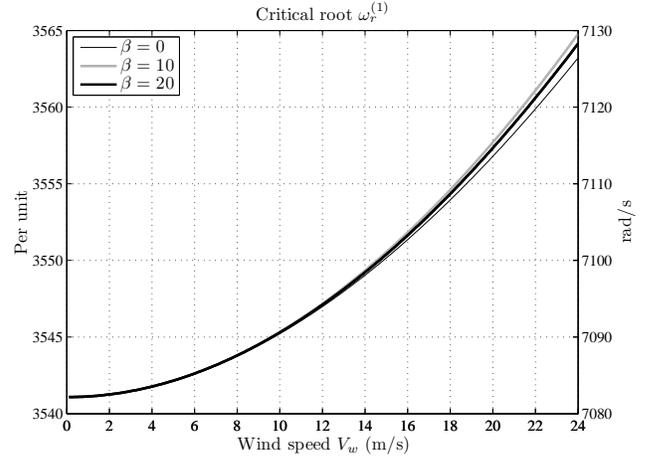}
\caption{Critical root $\omega_r^{(1)}$ as a function of blade pitch angle $\beta$ and wind speed $V_w$.}
\label{fig:wr1}
\end{figure}

\subsection{Lyapunov-like Function and Gradient-based Approach}\label{ssec:gradappr}

The third and final step of the controller development involves introducing a Lyapunov-like function, which measures the difference between the actual and desired powers, and utilizing a gradient-based approach, which minimizes this function.

Recall from the beginning of Section~\ref{sec:contdes} that the objective of the controller is to make the active and reactive powers, $P$ and $Q$, track some desired references, $P_d$ and $Q_d$, as closely as possible. In the MPT mode, where the goal is to generate as much active power as possible while maintaining an acceptable power factor, $P_d$ is set to a value that far exceeds what the wind turbine can possibly produce (e.g., in the per-unit system, $P_d>1$), while $Q_d$ is set to a value representing the desired power factor $\text{PF}_d=\frac{P_d}{\sqrt{P_d^2+Q_d^2}}$. In this mode, making $P$ and $Q$ approach $P_d$ and $Q_d$ is equivalent to maximizing the active power output while preserving the power factor. In the PR mode, where the goal is to regulate the powers, both $P_d$ and $Q_d$ are set to values representing power demands from the grid. In this mode, making $P$ and $Q$ approach $P_d$ and $Q_d$ amounts to achieving power regulation. Hence, the values of $P_d$ and $Q_d$ reflect the mode the wind farm operator wants the wind turbine to operate in. However, as far as the controller is concerned, it does not distinguish between the two modes; all it does is try its best to drive $P$ and $Q$ to $P_d$ and $Q_d$.

To mathematically describe the aforementioned controller objective, consider the following positive definite, quadratic Lyapunov-like function $V$ of the differences $P-P_d$ and $Q-Q_d$:
\begin{align}
V=\frac{1}{2}\begin{bmatrix} P-P_d & Q-Q_d \end{bmatrix}
\underbrace{\begin{bmatrix}
w_p & w_{pq} \\
w_{pq} & w_q
\end{bmatrix}}_{>0}
\begin{bmatrix}
P-P_d \\
Q-Q_d
\end{bmatrix},\label{eqn:objfun}
\end{align}
where $w_p$, $w_q$, and $w_{pq}$ are design parameters that allow one to specify how the differences $P-P_d$ and $Q-Q_d$, as well as their correlation $(P-P_d)(Q-Q_d)$, should be penalized. With this $V$, the above controller objective can be restated simply as: {\em make $V$ go to zero}, because when this happens, $P$ and $Q$ must both go to $P_d$ and $Q_d$. Since it is not always possible to achieve this objective---due to the fact that the wind may not always be strong enough---below we will attempt instead to {\em make $V$ as small as possible by minimizing it}.

To minimize $V$, we first show that $V$ is a function of $\omega_{rd}$, $\theta$, $\beta$, $V_w$, $P_d$, and $Q_d$, i.e.,
\begin{align}
V=f(\omega_{rd}, \theta, \beta, V_w, P_d, Q_d) \label{eqn:Vf}
\end{align}
for some $f$. Note from \eqref{eqn:objfun} that $V$ depends on $P$, $Q$, $P_d$, and $Q_d$. Also note from \eqref{eqn:Ps}--\eqref{eqn:Qr}, \eqref{eqn:P}, and \eqref{eqn:Q} that $P$ and $Q$, in turn, depend on $i$, $v_{dr}$, and $v_{qr}$ (recall that $v_{ds}$ and $v_{qs}$ are constants). Thus,
\begin{align}
V=f_1(i, v_{dr}, v_{qr}, P_d, Q_d) \label{eqn:Vf1}
\end{align}
for some $f_1$. Next, note from \eqref{eqn:ids}--\eqref{eqn:iqr} that $i$ depends on $x$; from \eqref{eqn:vdr1} and \eqref{eqn:vqr1} that $v_{dr}$ and $v_{qr}$ depend on $x$, $\omega_r$, $u_1$, and $u_2$; and from \eqref{eqn:equilibrium} that $x$ further depends on $u_1$ and $u_2$. Hence,
\begin{align}
(i, v_{dr}, v_{qr})=f_2(\omega_r, u_1, u_2) \label{eqn:Vf2}
\end{align}
for some $f_2$. Furthermore, note from \eqref{eqn:ztheta} and \eqref{eqn:zfunu} that $u_1$ and $u_2$ depend on $r$ and $\theta$, where $r$, in turn, depends on $\omega_r$, $\omega_{rd}$, $\beta$, and $V_w$ through \eqref{eqn:r}. Therefore,
\begin{align}
(u_1, u_2)=f_3(\omega_r, \omega_{rd}, \theta, \beta, V_w) \label{eqn:Vf3}
\end{align}
for some $f_3$. Finally, assuming that $\omega_{rd}$ does not exceed the first root $\omega_r^{(1)}$ and assuming that $\omega_{rd}$, $\beta$, and $V_w$ are all relatively slow-varying (see below for a discussion), Theorem~\ref{thm:speedctrl} says that $\omega_r$ goes to $\omega_{rd}$. Thus, after a short transient,
\begin{align}
\omega_r \approx \omega_{rd}. \label{eqn:Vwr}
\end{align}
Combining \eqref{eqn:Vf1}--\eqref{eqn:Vwr}, \eqref{eqn:Vf} is obtained as claimed.

Now observe that the first three variables $(\omega_{rd}, \theta, \beta)$ in \eqref{eqn:Vf} are yet to be determined, while the last three variables $(V_w, P_d, Q_d)$ are exogenous but known. Therefore, for each given $(V_w, P_d, Q_d)$, $(\omega_{rd}, \theta, \beta)$ can be chosen correspondingly in order to minimize $V$. This defines a mapping from $(V_w, P_d, Q_d)$ to $(\omega_{rd}, \theta, \beta)$, i.e.,
\begin{align}
(\omega_{rd}, \theta, \beta)&=F(V_w, P_d, Q_d)\nonumber\\
&\triangleq \operatorname{arg\,min}_{(x_1, x_2, x_3)}f(x_1,\!x_2,\!x_3,\!V_w,\!P_d,\!Q_d). \label{eqn:F}
\end{align}

In principle, the mapping $F$ in \eqref{eqn:F} may be constructed either {\em analytically}, by setting the gradient of $f(\cdot)$ to zero and solving for the minimizer $(\omega_{rd}, \theta, \beta)$ in terms of $(V_w, P_d, Q_d)$, or {\em numerically}, by means of a three-dimensional lookup table. Unfortunately, the former is difficult to carry out, since $f$, being composed of several nonlinear transformations \eqref{eqn:Vf1}--\eqref{eqn:Vwr}, has a rather complex expression. On the other hand, the latter is costly to generate and can easily become obsolete due to variations in system parameters. More important, selecting $(\omega_{rd}, \theta, \beta)$ as a {\em static} function of $(V_w, P_d, Q_d)$ as in \eqref{eqn:F} may lead to steep jumps in $(\omega_{rd}, \theta, \beta)$ because $V_w$ is ever-changing and may change dramatically, and both $P_d$ and $Q_d$ from the wind farm operator may experience step changes. Such steep jumps are undesirable because large fluctuations in $\omega_{rd}$ may prevent $\omega_r$ from tracking it, while discontinuous changes in $\beta$ may be mechanically impossible to realize, cause intolerable vibrations, and substantially cut short the lifetime of the turbine blades.

To alleviate the aforementioned deficiencies of selecting $(\omega_{rd}, \theta, \beta)$ according to \eqref{eqn:F}, a gradient-based approach is considered for updating $(\omega_{rd}, \theta, \beta)$:
\begin{align}
\dot{\omega}_{rd}&=-\epsilon_1\frac{\partial f}{\partial \omega_{rd}},\label{eqn:omegarddot}\\
\dot{\theta}&=-\epsilon_2\frac{\partial f}{\partial \theta},\label{eqn:thetadot}\\
\dot{\beta}&=-\epsilon_3\frac{\partial f}{\partial \beta},\label{eqn:betadot}
\end{align}
where $\epsilon_1, \epsilon_2, \epsilon_3>0$ are design parameters, which are meant to be relatively small, especially $\epsilon_1$ and $\epsilon_3$, in order to avoid steep changes in $\omega_{rd}$ and $\beta$. The partial derivatives $\frac{\partial f}{\partial \omega_{rd}}$, $\frac{\partial f}{\partial \theta}$, and $\frac{\partial f}{\partial \beta}$ in \eqref{eqn:omegarddot}--\eqref{eqn:betadot} can be calculated in a straightforward manner using \eqref{eqn:Vf1}--\eqref{eqn:Vwr}, but are omitted from this paper due to space limitations. These partial derivatives are practically implementable since, like $f$, they depend on $\omega_{rd}$, $\theta$, $\beta$, $V_w$, $P_d$, and $Q_d$, all of which are known. With this gradient-based approach, $(\omega_{rd}, \theta, \beta)$ is guaranteed to asymptotically converge to a local minimum when $(V_w, P_d, Q_d)$ is constant, and track a local minimum when $(V_w, P_d, Q_d)$ varies.

To help the readers better understand the proposed nonlinear controller depicted in Figure~\ref{fig:wtc} and described in this section, the internal structure of this controller is revealed in Figure~\ref{fig:ctrller}. Observe that each arrow in this figure represents a signal, whereas each tiny box represents equations relating the signals.

\begin{figure}[tb]
\centering\begin{texdraw}
\drawdim in \setunitscale 1
\move(0 0.1) \lvec(0.4 0.1) \arrowheadtype t:F \arrowheadsize l:0.08 w:0.04 \avec(0.6 0.3)
\move(0 0.5) \lvec(0.4 0.5) \avec(0.6 0.3) \lvec(0.8 0.3) \avec(1.0 0.3) \lvec(1.28 0.3) \move(1.3 0.3) \larc r:0.02 sd:0 ed:180 \move(1.32 0.3) \lvec(1.9 0.3) \avec(1.9 0.9)
\move(0.8 0.3) \avec(1.0 0.1) \lvec(1.3 0.1) \avec(1.3 0.9)
\move(0.8 0.3) \avec(1.0 0.5) \lvec(1.28 0.5) \move(1.3 0.5) \larc r:0.02 sd:0 ed:180 \move(1.32 0.5) \lvec(1.6 0.5) \avec(1.6 0.9)
\move(0 0.7) \lvec(0.4 0.7) \avec(0.6 0.9) \move(0.4 0.7) \lvec(1.1 0.7) \avec(1.3 0.9)
\move(0 1.1) \lvec(0.4 1.1) \avec(0.6 0.9) \avec(1.3 0.9) \avec(1.6 0.9) \avec(1.9 0.9) \avec(2.4 0.9) \avec(3.15 0.9)
\move(1.3 0.1) \avec(3.15 0.1)
\move(0.4 1.1) \lvec(2.2 1.1) \avec(2.4 0.9) \move(1.4 1.1) \avec(1.6 0.9) \move(1.1 1.1) \avec(1.3 0.9)
\move(0 1.3) \avec(1.3 1.3) \lvec(2.4 1.3) \avec(2.4 0.9)
\linewd 0.02 \move(2.875 0.01) \lvec(2.875 1.45) \lvec(0.35 1.45) \lvec(0.35 0.01) \lvec(2.875 0.01)
\move(0.58 0.28) \linewd 0.01 \lvec(0.62 0.28) \lvec(0.62 0.32) \lvec(0.58 0.32) \lvec(0.58 0.28) \lfill f:1
\move(0.58 0.88) \linewd 0.01 \lvec(0.62 0.88) \lvec(0.62 0.92) \lvec(0.58 0.92) \lvec(0.58 0.88) \lfill f:1
\move(0.98 0.08) \linewd 0.01 \lvec(1.02 0.08) \lvec(1.02 0.12) \lvec(0.98 0.12) \lvec(0.98 0.08) \lfill f:1
\move(0.98 0.28) \linewd 0.01 \lvec(1.02 0.28) \lvec(1.02 0.32) \lvec(0.98 0.32) \lvec(0.98 0.28) \lfill f:1
\move(0.98 0.48) \linewd 0.01 \lvec(1.02 0.48) \lvec(1.02 0.52) \lvec(0.98 0.52) \lvec(0.98 0.48) \lfill f:1
\move(1.28 0.88) \linewd 0.01 \lvec(1.32 0.88) \lvec(1.32 0.92) \lvec(1.28 0.92) \lvec(1.28 0.88) \lfill f:1
\move(1.58 0.88) \linewd 0.01 \lvec(1.62 0.88) \lvec(1.62 0.92) \lvec(1.58 0.92) \lvec(1.58 0.88) \lfill f:1
\move(1.88 0.88) \linewd 0.01 \lvec(1.92 0.88) \lvec(1.92 0.92) \lvec(1.88 0.92) \lvec(1.88 0.88) \lfill f:1
\move(2.38 0.88) \linewd 0.01 \lvec(2.42 0.88) \lvec(2.42 0.92) \lvec(2.38 0.92) \lvec(2.38 0.88) \lfill f:1
\move(1.28 1.28) \linewd 0.01 \lvec(1.32 1.28) \lvec(1.32 1.32) \lvec(1.28 1.32) \lvec(1.28 1.28) \lfill f:1
\textref h:R v:B \htext(0.3 0.11){$P, Q$}
\textref h:R v:B \htext(0.325 0.51){$P_d, Q_d$}
\textref h:R v:B \htext(0.25 0.71){$V_w$} \htext(0.25 1.11){$\omega_r$}
\textref h:L v:B \htext(0.7 0.31){$V$} \htext(1.05 0.91){$\lambda$} \htext(1.35 0.91){$T_m$} \htext(1.7 0.91){$r$} \htext(1.95 0.91){$u_1, u_2$} \htext(2.9 0.91){$v_{dr}, v_{qr}$}
\htext(1.6 0.11){$\beta$} \htext(2.95 0.11){$\beta$} \htext(1.5 0.31){$\theta$} \htext(1.35 0.51){$\omega_{rd}$}
\textref h:R v:B \htext(0.25 1.31){$i$}
\textref h:L v:B \htext(1.85 1.31){$x$}
\htext(0.6 0.15){\eqref{eqn:objfun}} \htext(0.6 0.75){\eqref{eqn:lambda}} \htext(1.1 1.15){\eqref{eqn:ids}--\eqref{eqn:iqr}}
\htext(1.3 0.75){\eqref{eqn:Tm}} \htext(1.6 0.75){\eqref{eqn:r}} \htext(1.9 0.75){\eqref{eqn:ztheta},\eqref{eqn:zfunu}}
\htext(2.4 0.75){\eqref{eqn:vdr1},\eqref{eqn:vqr1}}
\htext(1.0 0.1){\eqref{eqn:betadot}} \htext(1.0 0.3){\eqref{eqn:thetadot}} \htext(1.0 0.5){\eqref{eqn:omegarddot}}
\end{texdraw}
\caption{Internal structure of the proposed nonlinear controller.}
\label{fig:ctrller}
\end{figure}

\section{Simulation Studies}\label{sec:simstud}

\begin{figure*}[tb]
\centering\includegraphics[width=0.95\textwidth]{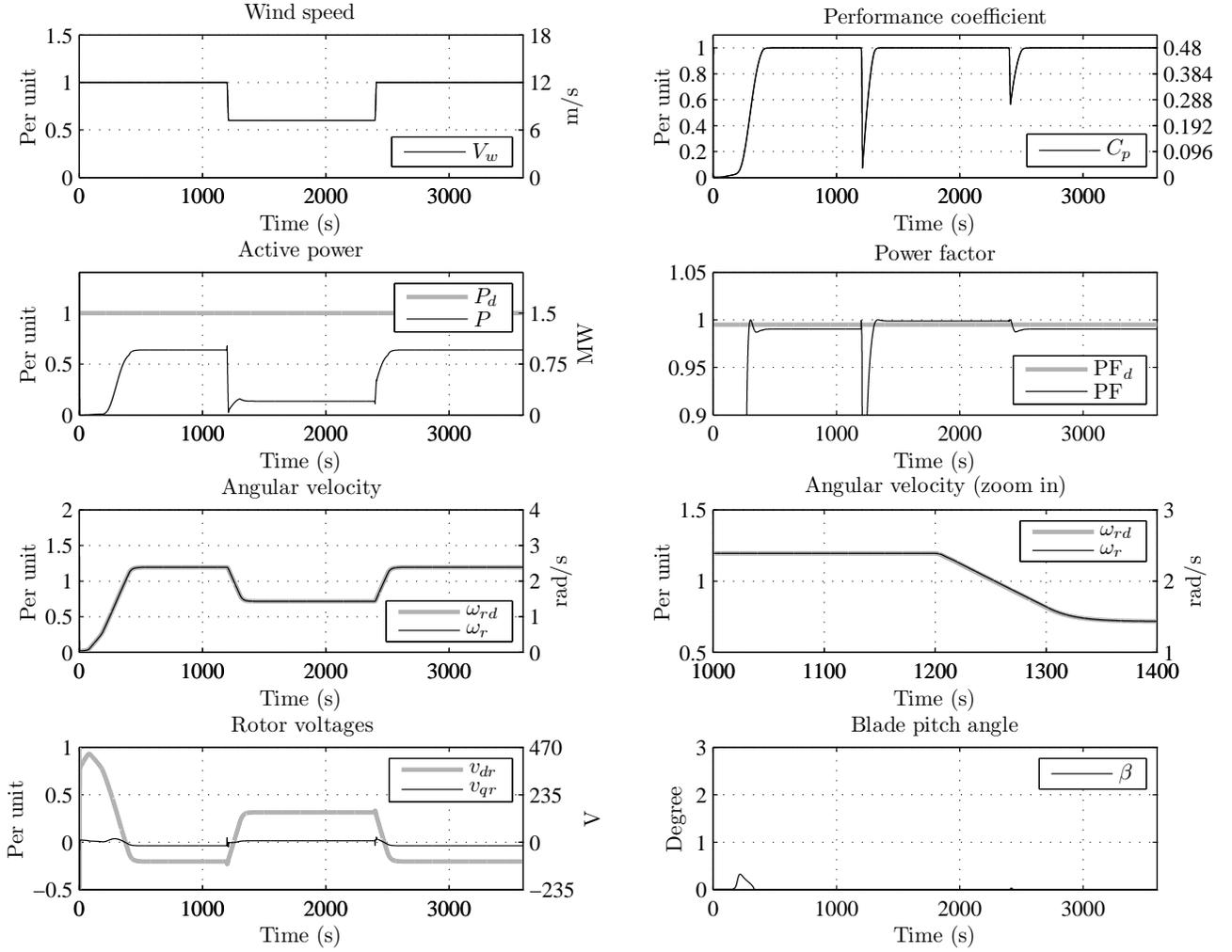}
\caption{Scenario 1 illustrating the maximum power tracking (MPT) mode.}
\label{fig:res1}
\end{figure*}

To demonstrate the effectiveness of the controller presented above, MATLAB simulations have been carried out, in which the controller is applied to a $1.5\operatorname{MW}$ GE turbine with $575\operatorname{V}$ base voltage and $60\operatorname{Hz}$ base frequency. To describe settings and results of the simulations, both the per-unit system and the physical unit system will be used, given that they are popular in the literature.

The simulation settings are as follows: All values of the wind turbine parameters are taken from the Wind Turbine Block of the Distributed Resources Library in MATLAB/Simulink R2007a. Specifically, the values are: $\omega_s(\operatorname{pu})=1$, $R_s(\operatorname{pu})=0.00706$, $R_r(\operatorname{pu})=0.005$, $L_s(\operatorname{pu})=3.071$, $L_r(\operatorname{pu})=3.056$, $L_m(\operatorname{pu})=2.9$, $J(\operatorname{pu})=10.08$, $C_f(\operatorname{pu})=0.01$, $\lambda_{\text{nom}}=8.1$, and $C_{p\_\text{nom}}=0.48$. In addition, the base wind speed is $V_{w\_\text{base}}=12\operatorname{m/s}$. In MATLAB/Simulink R2007a, the mechanical power captured is given by
\begin{align}
P_m(\operatorname{pu})=\frac{P_{\text{wind}\_\text{base}}P_\text{nom}}{P_{\text{elec}\_\text{base}}}\,C_p(\operatorname{pu})\,V_w(\operatorname{pu})^3,\label{eqn:Pmpu}
\end{align}
where $P_{\text{wind}\_\text{base}}=1.5\operatorname{MW}$, $P_{\text{nom}}=0.73$, and $P_{\text{elec}\_\text{base}}=1.5\times10^6/0.9\operatorname{VA}$. From \eqref{eqn:Pmpu}, it can be seen that at the base wind speed $V_w(\operatorname{pu})=1$, $P_m(\operatorname{pu})$ is capped at $0.657$. Furthermore, without loss of generality, the constant stator voltages are set to $v_{ds}(\operatorname{pu})=1$ and $v_{qs}(\operatorname{pu})=0$.

For the proposed controller, we let the desired poles of the electrical dynamics \eqref{eqn:dotx} be located at $-15$, $-5$, and $-10\pm5j$, so that the corresponding state feedback gain matrix $K$, calculated using MATLAB's {\tt place()} function, is
\begin{align*}
K=
\begin{bmatrix}
5135.9 & 259.2 & 20.3 & 1.9\\
-2676.7 & 4289.9 & -1.3 & 19.7
\end{bmatrix}.
\end{align*}
In addition, we let $w_p=10$, $w_q=1$, and $w_{pq}=0$, implying that we penalize the difference between $P$ and $P_d$ much more than we do $Q$ and $Q_d$. Finally, we choose the rest of the controller parameters as follows: $\alpha=10$, $\epsilon_1=4\times10^{-3}$, $\epsilon_2=1\times10^{-4}$, and $\epsilon_3=2$.

Based on the above wind turbine and controller parameters, simulations have been carried out for four different scenarios. Description of each scenario, along with the simulation result, is given below:

\begin{figure*}[tb]
\centering\includegraphics[width=0.95\textwidth]{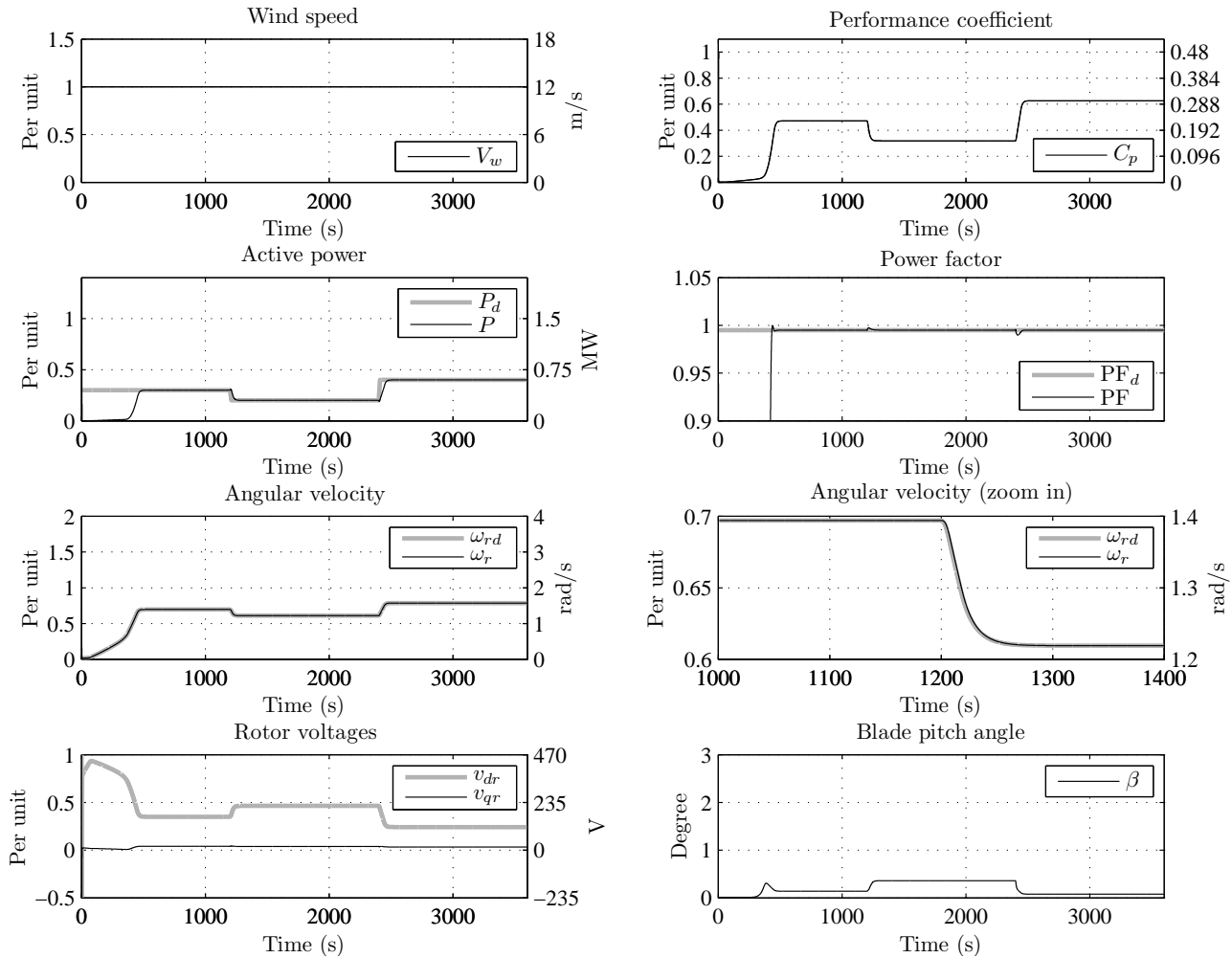}
\caption{Scenario 2 illustrating the power regulation (PR) mode.}
\label{fig:res2}
\end{figure*}

{\bf Scenario 1}: {\em Maximum power tracking (MPT) mode}.
In this scenario, we simulate the situation where the wind speed $V_w$ experiences step changes between $12\operatorname{m/s}$ and $7.2\operatorname{m/s}$, while the desired powers $P_d$ and $Q_d$ are kept constant at $1.5\operatorname{MW}$ and $0.15\operatorname{MW}$, so that the desired power factor is $\text{PF}_d=0.995$. Since $P_m$ cannot exceed $0.657\times1.5\operatorname{MW}$ at the base wind speed $V_{w\_\text{base}}=12\operatorname{m/s}$, the wind turbine is expected to operate in the MPT mode. Figure~\ref{fig:res1} shows the simulation result for this scenario, where the key signals are plotted as functions of time in both the per-unit and physical unit systems wherever applicable. Observe that, after a short transient, the wind turbine converts as much wind energy to electric energy as it possibly could, as indicated by $C_p$ approaching its maximum value of $0.48$ in subplot 2 (which translates into $P$ approaching its maximum possible value in subplot 3). Also observe that, when $V_w$ goes from $12\operatorname{m/s}$ to $7.2\operatorname{m/s}$ and from $7.2\operatorname{m/s}$ back to $12\operatorname{m/s}$, $C_p$ drops sharply but quickly returns to its maximum value. Note from subplot 4 that, regardless of $V_w$, the power factor PF is maintained near the desired level of $0.995$. Moreover, note from subplots 5 and 6 that the angular velocity $\omega_r$ tracks the desired time-varying reference $\omega_{rd}$ closely (subplot 6 is a zoom-in version of subplot 5). Finally, the control inputs $v_{dr}$, $v_{qr}$, and $\beta$ are shown in subplots 7 and 8, respectively. Note that, to maximize $C_p$, $\beta$ is kept at its minimum value $\beta_{\min}=0\operatorname{deg}$.

{\bf Scenario 2}: {\em Power regulation (PR) mode}.
In this scenario, we simulate the situation where $V_w$ is kept constant at the base value of $12\operatorname{m/s}$, while $P_d$ experiences step changes from $0.45\operatorname{MW}$ to $0.3\operatorname{MW}$ and then to $0.6\operatorname{MW}$, and $Q_d$ is such that $\text{PF}_d=0.995$. Since $P_d$ is always less than $0.657\times1.5\operatorname{MW}$ at the base wind speed of $12\operatorname{m/s}$, the wind turbine is expected to operate in the PR mode with different setpoints $P_d$. Figure~\ref{fig:res2} shows the simulation result for this scenario. Observe from subplot 2 that $C_p$ is less than its maximum value of $0.48$. This suggests that the wind turbine attempts to capture less power than what it possibly could from wind, since $P_d$ is relatively small. Indeed, as can be seen from subplots 3 and 4, the turbine produces just enough active and reactive powers, making $P$ track $P_d$ closely while maintaining PF at $\text{PF}_d$. Also observe from subplots 5 and 6 that $\omega_r$ closely follows $\omega_{rd}$, as desired. Finally, note from subplot 8 that $\beta$ increases slightly in order to capture less power between 1200s and 2400s, when $P_d$ is smallest.

\begin{figure*}[tb]
\centering\includegraphics[width=0.95\textwidth]{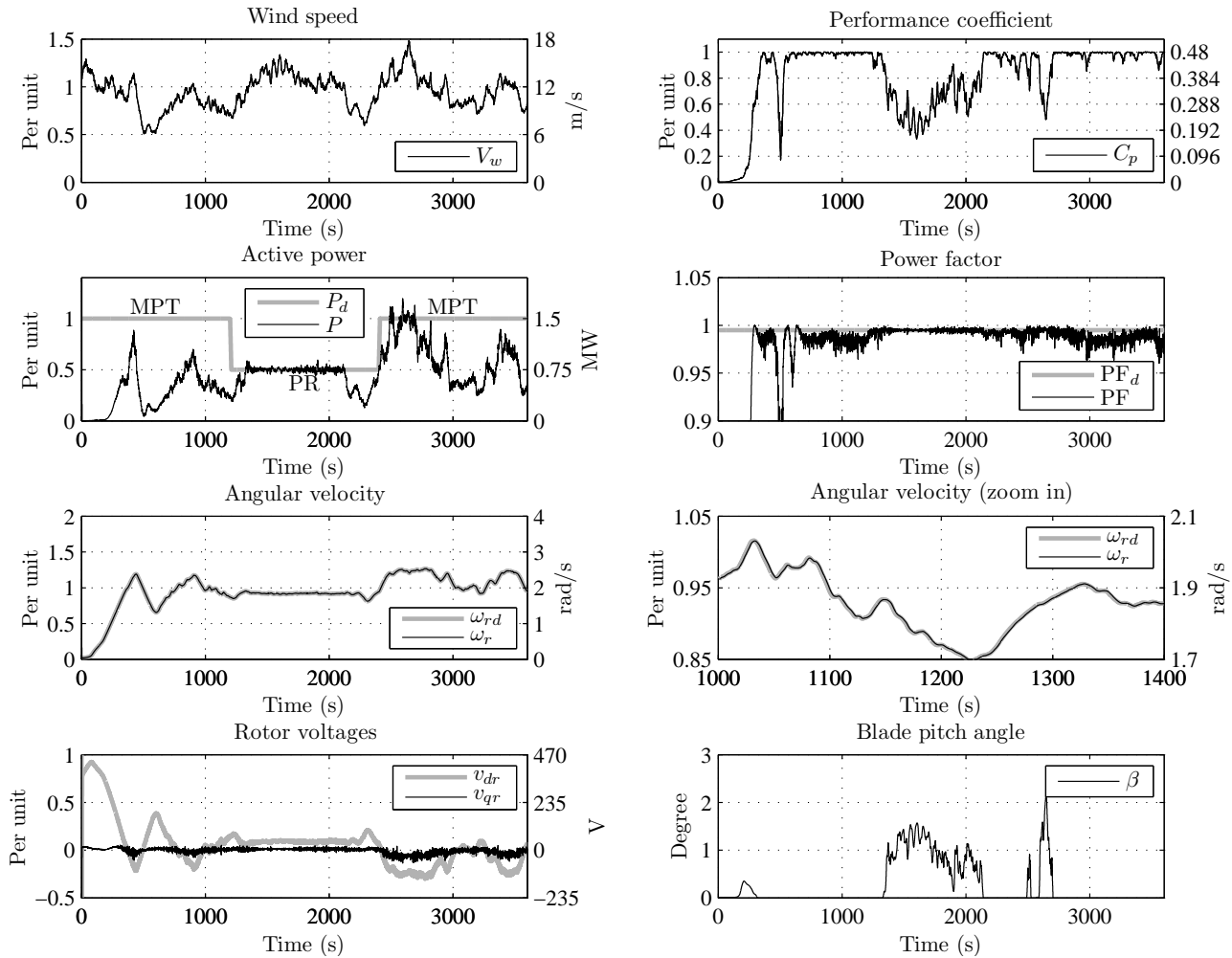}
\caption{Scenario 3 illustrating the seamless switching between the MPT and PR modes under an actual wind profile from a wind farm located in northwest Oklahoma.}
\label{fig:res3}
\end{figure*}

{\bf Scenario 3}: {\em Seamless switching between the MPT and PR modes}.
In this scenario, we simulate the situation where $P_d$ experiences large step changes between $1.5\operatorname{MW}$ and $0.75\operatorname{MW}$, $Q_d$ again is such that $\text{PF}_d$ is $0.995$, and an actual wind profile from a wind farm located in northwest Oklahoma is used to define $V_w$. The actual wind profile consists of 145 samples, taken at the rate of one sample per 10 minutes, over a 24-hour period. In order to use this wind profile in a 1-hour simulation (as in Scenarios~1 and~2), we compress the time scale, assuming that the samples were taken over a 1-hour period. Note that compressing the time scale in this way makes the problem more challenging because the wind speed varies faster than it actually does. Figure~\ref{fig:res3} shows the simulation result for this scenario, with subplot 1 displaying the wind profile. Observe from subplots 2 and 3 that, for the first 1200 seconds during which $P_d$ is $1.5\operatorname{MW}$, the turbine operates in the MPT mode, grabbing as much wind energy as it possibly could, by driving $C_p$ to $0.48$ and maximizing $P$. At time 1200s when $P_d$ abruptly drops from $1.5\operatorname{MW}$ to $0.75\operatorname{MW}$, the turbine seamlessly switches from the MPT mode to the PR mode, quickly reducing $C_p$, accurately regulating $P$ around $P_d$, and effectively rejecting the ``disturbance'' $V_w$. Note that between 2100s and 2400s, the wind is not strong enough to sustain the PR mode. As a result, the MPT mode resumes seamlessly, as indicated by $C_p$ returning immediately to its maximum value of $0.48$. Finally, at time 2400s when $P_d$ goes from $0.75\operatorname{MW}$ back to $1.5\operatorname{MW}$, the turbine keeps working in the MPT mode, continuing to maximize both $C_p$ and $P$. Notice from subplots 4--6 that, over the course of the simulation, both PF and $\omega_r$ are maintained at $\text{PF}_d$ and $\omega_{rd}$, respectively, despite the random wind fluctuations. Also notice from subplot 8 that $\beta$ increases somewhat during the PR mode in order to help capture less power.

\begin{figure}[tb]
\centering\includegraphics[width=0.95\linewidth]{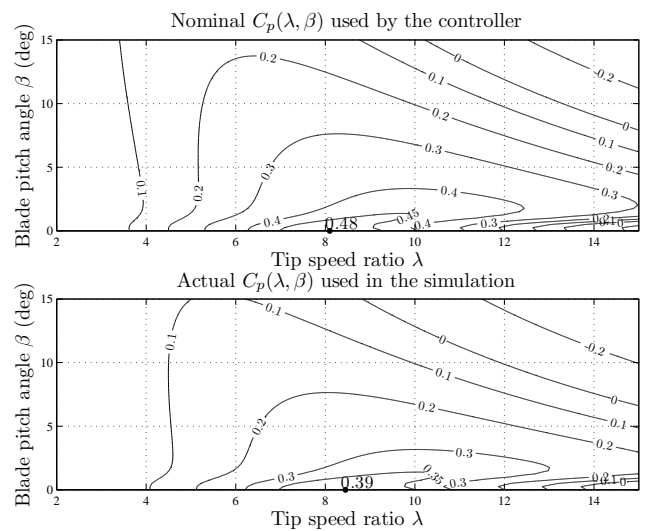}
\caption{Contour plots of the nominal and actual $C_p(\lambda,\beta)$ for Scenario 4.}
\label{fig:cp}
\end{figure}

\begin{figure*}[tb]
\centering\includegraphics[width=0.95\textwidth]{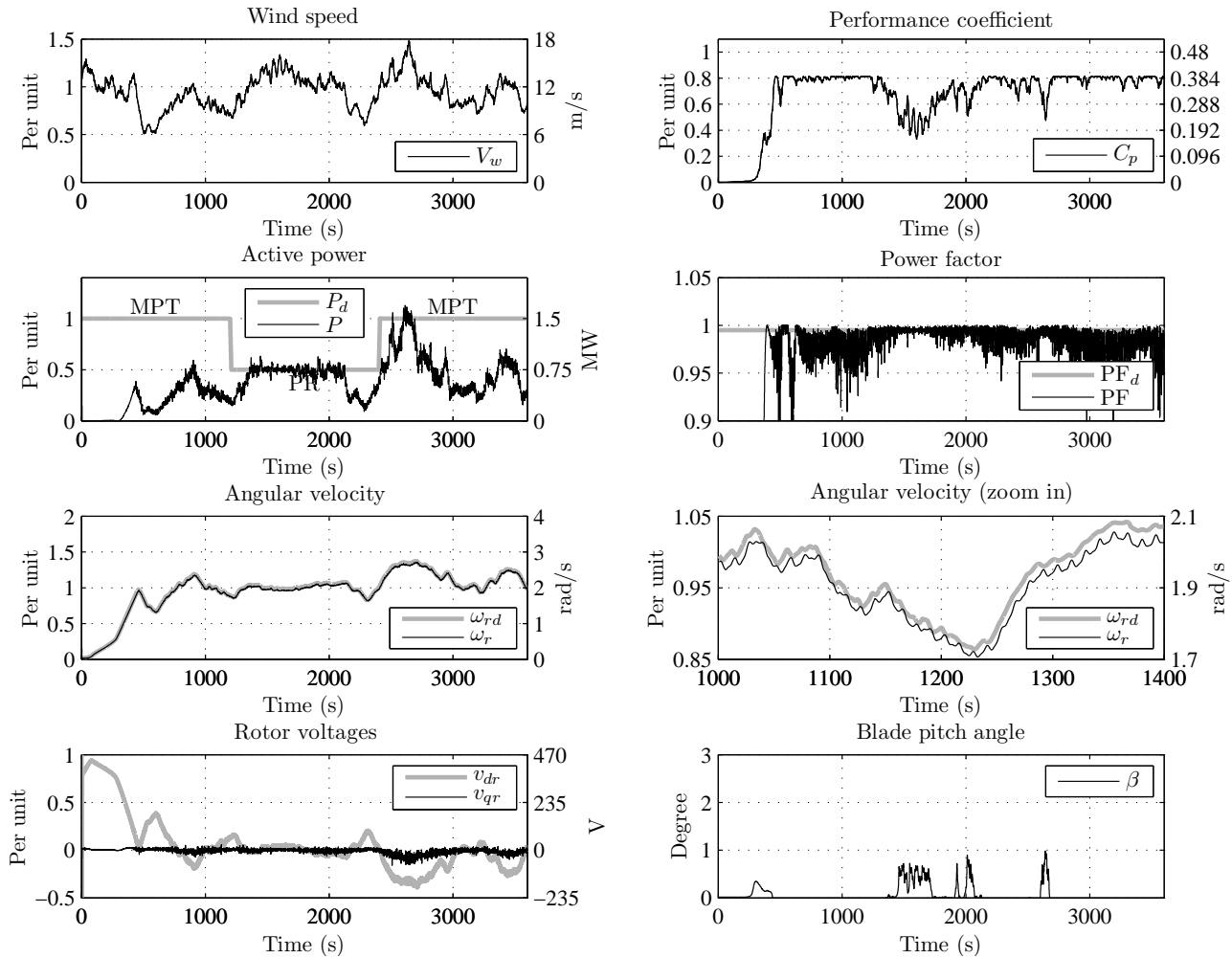}
\caption{Scenario 4 illustrating the robustness of the proposed controller to modeling errors in $C_f$ and $C_p$ and noisy measurements in $V_w$.}
\label{fig:res4}
\end{figure*}

{\bf Scenario 4}: {\em Robustness of the proposed controller}.
In this scenario, we simulate the exact same situation as that of Scenario 3 (i.e., with the same $V_w$, $P_d$, and $Q_d$) but with modeling errors and measurement noise. That is, we allow for modeling errors in the friction coefficient $C_f$ and the performance coefficient $C_p$ (due, for example, to changing weather conditions, blade erosions, and aging) as well as measurement noise in the wind speed $V_w$ (since $V_w$ is usually measured by an anemometer located on the nacelle behind the blades of a wind turbine). Specifically, we assume that the nominal $C_f$ used by the controller is $0.01(\operatorname{pu})$, whereas the actual $C_f$ used in the simulation is $0.012(\operatorname{pu})$, so that $C_f$ has a 20\% modeling error. Moreover, we assume that the nominal $C_p$ used by the controller is given by \eqref{eqn:cp} and \eqref{eqn:lambdai} with $c_1=0.5176$, $c_2=116$, $c_3=0.4$, $c_4=5$, $c_5=21$, and $c_6=0.0068$, whereas the actual $C_p$ used in the simulation is also given by \eqref{eqn:cp} and \eqref{eqn:lambdai} but with $c_1=0.45$, $c_2=115$, $c_3=0.5$, $c_4=4.5$, $c_5=22$, and $c_6=0.003$.
Figure~\ref{fig:cp} displays the contour plots of the nominal and actual $C_p(\lambda,\beta)$ for $\lambda\in[2,15]$ and $\beta\in[0,15]$, showing that $C_p$ has noticeable modeling errors. In particular, the nominal $C_p$ attains its maximum of $0.48$ at $(\lambda, \beta)=(8.1,0)$, whereas the actual $C_p$ attains its maximum of $0.39$ at $(\lambda, \beta)=(8.45,0)$. Finally, we assume that the measured $V_w$ used by the controller, denoted as $V_{w\_\text{meas}}$, is related to the actual $V_w$ used in the simulation via
\begin{align*}
V_{w\_\text{meas}}(t)=V_w(t)+0.5+0.5\sin(0.5t)+0.25\cos(t),
\end{align*}
where the second term on the right-hand side represents a constant measurement bias, while the third and fourth represent measurement noises with different amplitudes and frequencies. Figure~\ref{fig:res4} shows the simulation result for this scenario. Comparing this figure with Figure~\ref{fig:res3}, the following observations can be made: first, $C_p$ in Figure~\ref{fig:res4} attains its maximum value of $0.39$ in the MPT mode, as opposed to the $0.48$ attained by $C_p$ in Figure~\ref{fig:res3}. Second, PF in Figure~\ref{fig:res4} has a larger fluctuation compared to PF in Figure~\ref{fig:res3}, but nonetheless is maintained around $\text{PF}_d$. Third, $\omega_r$ in Figure~\ref{fig:res4} does not track $\omega_{rd}$ as closely as $\omega_r$ in Figure~\ref{fig:res3} does. Nevertheless, despite the wind fluctuations, modeling errors, and noisy measurements, the controller performs reasonably well, as evident by how close $C_p$ is to its maximum value of $0.39$ in the MPT mode, how close $P$ is to $P_d$ in the PR mode, and how close PF is to $\text{PF}_d$ throughout the simulation. Therefore, the controller is fairly robust.

As it follows from Figures~\ref{fig:res1}--\ref{fig:res4} and the above discussions, the proposed controller exhibits excellent performance. Specifically, the controller works well in both the MPT mode under step changes in the wind speed (Scenario 1) and the PR mode under step changes in the power commands (Scenario 2). In addition, it is capable of seamlessly switching between the two modes in the presence of changing power commands and a realistic, fluctuating wind profile (Scenario 3). Finally, the controller is robust to small modeling errors and noisy measurements commonly encountered in practice (Scenario 4).

\section{Conclusion}\label{sec:concl}

In this paper, we have developed a feedback/feedforward nonlinear controller, which accounts for the nonlinearities of variable-speed wind turbines with doubly fed induction generators, and bypasses the need for approximate linearization. Its development is based on applying a mixture of linear and nonlinear control design techniques on three time scales, including feedback linearization, pole placement, and gradient-based minimization of a Lyapunov-like potential function. Simulation results have shown that the proposed scheme not only effectively controls the active and reactive powers in both the MPT and PR modes, it also ensures seamless switching between the two modes. Therefore, the proposed controller may be recommended as a candidate for future wind turbine control.

\bibliographystyle{IEEEtran}
\bibliography{paper}

\begin{thebibliography}{10}
\providecommand{\url}[1]{#1}
\csname url@samestyle\endcsname
\providecommand{\newblock}{\relax}
\providecommand{\bibinfo}[2]{#2}
\providecommand{\BIBentrySTDinterwordspacing}{\spaceskip=0pt\relax}
\providecommand{\BIBentryALTinterwordstretchfactor}{4}
\providecommand{\BIBentryALTinterwordspacing}{\spaceskip=\fontdimen2\font plus
\BIBentryALTinterwordstretchfactor\fontdimen3\font minus
  \fontdimen4\font\relax}
\providecommand{\BIBforeignlanguage}[2]{{%
\expandafter\ifx\csname l@#1\endcsname\relax
\typeout{** WARNING: IEEEtran.bst: No hyphenation pattern has been}%
\typeout{** loaded for the language `#1'. Using the pattern for}%
\typeout{** the default language instead.}%
\else
\language=\csname l@#1\endcsname
\fi
#2}}
\providecommand{\BIBdecl}{\relax}
\BIBdecl

\bibitem{Eriksen05}
P.~B. Eriksen, T.~Ackermann, H.~Abildgaard, P.~Smith, W.~Winter, and J.~M.
  {Rodriguez Garcia}, ``System operation with high wind penetration,''
  \emph{IEEE Power and Energy Magazine}, vol.~3, no.~6, pp. 65--74, 2005.

\bibitem{USDOE08}
``20\% wind energy by 2030---increasing wind energy's contribution to {U}.{S}.
  electricity supply,'' U.S. Department of Energy, Washington, DC, Executive
  Summary, 2008.

\bibitem{UCTE06}
``Interim report---system disturbance on 4 {N}ovember 2006,'' Union for the
  Co-Ordination of Transmission of Electricity, Brussels, Belgium, Executive
  Summary, 2006.

\bibitem{Prats00}
M.~A.~M. Prats, J.~M. Carrasco, E.~Galvan, J.~A. Sanchez, L.~G. Franquelo, and
  C.~Batista, ``Improving transition between power optimization and power
  limitation of variable speed, variable pitch wind turbines using fuzzy
  control techniques,'' in \emph{Proc. Conference of the Industrial Electronics
  Society}, Nagoya, Japan, 2000, pp. 1497--1502.

\bibitem{Iyasere08}
E.~Iyasere, M.~Salah, D.~Dawson, and J.~Wagner, ``Nonlinear robust control to
  maximize energy capture in a variable speed wind turbine,'' in \emph{Proc.
  American Control Conference}, Seattle, WA, 2008, pp. 1824--1829.

\bibitem{Johnson04b}
K.~E. Johnson, L.~J. Fingersh, M.~J. Balas, and L.~Y. Pao, ``Methods for
  increasing region 2 power capture on a variable-speed wind turbine,''
  \emph{Journal of Solar Energy Engineering}, vol. 126, no.~4, pp. 1092--1100,
  2004.

\bibitem{Johnson06}
K.~E. Johnson, L.~Y. Pao, M.~J. Balas, and L.~J. Fingersh, ``Control of
  variable-speed wind turbines: standard and adaptive techniques for maximizing
  energy capture,'' \emph{IEEE Control Systems Magazine}, vol.~26, no.~3, pp.
  70--81, 2006.

\bibitem{Calderaro07}
V.~Calderaro, V.~Galdi, A.~Piccolo, and P.~Siano, ``Design and implementation
  of a fuzzy controller for wind generators performance optimisation,'' in
  \emph{Proc. European Conference on Power Electronics and Applications},
  Aalborg, Denmark, 2007, pp. 1--10.

\bibitem{Galdi08}
V.~Galdi, A.~Piccolo, and P.~Siano, ``Designing an adaptive fuzzy controller
  for maximum wind energy extraction,'' \emph{IEEE Transactions on Energy
  Conversion}, vol.~23, no.~2, pp. 559--569, 2008.

\bibitem{Chedid00}
R.~B. Chedid, S.~H. Karaki, and C.~{El-Chamali}, ``Adaptive fuzzy control for
  wind-diesel weak power systems,'' \emph{IEEE Transactions on Energy
  Conversion}, vol.~15, no.~1, pp. 71--78, 2000.

\bibitem{Beltran08}
B.~Beltran, T.~Ahmed-Ali, and M.~E.~H. Benbouzid, ``Sliding mode power control
  of variable-speed wind energy conversion systems,'' \emph{IEEE Transactions
  on Energy Conversion}, vol.~23, no.~2, pp. 551--558, 2008.

\bibitem{Malinga03}
B.~Malinga, J.~Sneckenberger, and J.~Feliachi, ``Modeling and control of a wind
  turbine as a distributed resource,'' in \emph{Proc. Southeastern Symposium on
  System Theory}, Morgantown, WV, 2003, pp. 108--112.

\bibitem{Hand99}
M.~M. Hand and M.~J. Balas, ``Non-linear and linear model based controller
  design for variable-speed wind turbines,'' in \emph{Proc. ASME/JSME Joint
  Fluids Engineering Conference}, San Francisco, CA, 1999, pp. 18--23.

\bibitem{ZhangL08}
L.~Zhang, H.~Li, C.~E, J.~Li, and H.~Xu, ``Pitch control of large scale wind
  turbine based on fuzzy-{PD} method,'' in \emph{Proc. International Conference
  on Electric Utility Deregulation and Restructuring and Power Technologies},
  Nanjing, China, 2008, pp. 2447--2452.

\bibitem{GengH09}
H.~Geng and G.~Yang, ``Robust pitch controller for output power levelling of
  variable-speed variable-pitch wind turbine generator systems,'' \emph{IET
  Renewable Power Generation}, vol.~3, no.~2, pp. 168--179, 2009.

\bibitem{Muljadi01}
E.~Muljadi and C.~P. Butterfield, ``Pitch-controlled variable-speed wind
  turbine generation,'' \emph{IEEE Transactions on Industry Applications},
  vol.~37, no.~1, pp. 240--246, 2001.

\bibitem{Senjyu06}
T.~Senjyu, R.~Sakamoto, N.~Urasaki, T.~Funabashi, H.~Fujita, and H.~Sekine,
  ``Output power leveling of wind turbine generator for all operating regions
  by pitch angle control,'' \emph{IEEE Transactions on Energy Conversion},
  vol.~21, no.~2, pp. 467--475, 2006.

\bibitem{Stol01}
K.~Stol and M.~J. Balas, ``Full-state feedback control of a variable-speed wind
  turbine: A comparison of periodic and constant gains,'' \emph{Journal of
  Solar Energy Engineering}, vol. 123, no.~4, pp. 319--326, 2001.

\bibitem{Wright03}
A.~D. Wright and M.~J. Balas, ``Design of state-space-based control algorithms
  for wind turbine speed regulation,'' \emph{Journal of Solar Energy
  Engineering}, vol. 125, no.~4, pp. 386--395, 2003.

\bibitem{Wright03b}
------, ``Design of modern controls for the controlled advanced research
  turbine ({CART}),'' in \emph{Proc. ASME Wind Energy Symposium}, Reno, NV,
  2003, pp. 304--316.

\bibitem{ZhangJZ08}
J.~Zhang, M.~Cheng, Z.~Chen, and X.~Fu, ``Pitch angle control for variable
  speed wind turbines,'' in \emph{Proc. International Conference on Electric
  Utility Deregulation and Restructuring and Power Technologies}, Nanjing,
  China, 2008, pp. 2691--2696.

\bibitem{Janssens07}
N.~A. Janssens, G.~Lambin, and N.~Bragard, ``Active power control strategies of
  {DFIG} wind turbines,'' in \emph{Proc. IEEE Power Tech}, Lausanne,
  Switzerland, 2007, pp. 516--521.

\bibitem{Tarnowski07}
G.~C. Tarnowski and R.~Reginatto, ``Adding active power regulation to wind
  farms with variable speed induction generators,'' in \emph{Proc. IEEE Power
  Engineering Society General Meeting}, Tampa, FL, 2007, pp. 1--8.

\bibitem{Ko07}
H.-S. Ko, G.-G. Yoon, and W.-P. Hong, ``Active use of {DFIG}-based
  variable-speed wind-turbine for voltage regulation at a remote location,''
  \emph{IEEE Transactions on Power Systems}, vol.~22, no.~4, pp. 1916--1925,
  2007.

\bibitem{Brekken03}
T.~Brekken and N.~Mohan, ``A novel doubly-fed induction wind generator control
  scheme for reactive power control and torque pulsation compensation under
  unbalanced grid voltage conditions,'' in \emph{Proc. IEEE Power Electronics
  Specialist Conference}, Acapulco, Mexico, 2003, pp. 760--764.

\bibitem{Marinescu04}
B.~Marinescu, ``A robust coordinated control of the doubly-fed induction
  machine for wind turbines: a state-space based approach,'' in \emph{Proc.
  American Control Conference}, Boston, MA, 2004, pp. 174--179.

\bibitem{LiDD04}
D.~D. Li and C.~Chen, ``Decoupled control of speed and reactive power of
  doubly-fed induction generator,'' in \emph{Proc. International Conference on
  Power System Technology}, Singapore, 2004, pp. 356--360.

\bibitem{ZhiDW07}
D.~Zhi and L.~Xu, ``Direct power control of {DFIG} with constant switching
  frequency and improved transient performance,'' \emph{IEEE Transactions on
  Energy Conversion}, vol.~22, no.~1, pp. 110--118, 2007.

\bibitem{WuF08}
F.~Wu, X.~Zhang, P.~Ju, and M.~J.~H. Sterling, ``Decentralized nonlinear
  control of wind turbine with doubly fed induction generator,'' \emph{IEEE
  Transactions on Power Systems}, vol.~23, no.~2, pp. 613--621, 2008.

\bibitem{Alegria04}
I.~M. {de Alegria}, J.~Andreu, P.~Ibanez, J.~L. Villate, and I.~Gabiola,
  ``Novel power error vector control for wind turbine with doubly fed induction
  generator,'' in \emph{Proc. Conference of the IEEE Industrial Electronics
  Society}, Busan, South Korea, 2004, pp. 1218--1223.

\bibitem{Pena96}
R.~Pena, J.~C. Clare, and G.~M. Asher, ``Doubly fed induction generator using
  back-to-back {PWM} converters and its application to variable-speed
  wind-energy generation,'' \emph{IEE Proc. Electric Power Applications}, vol.
  143, no.~3, pp. 231--241, 1996.

\bibitem{Hopfensperger00}
B.~Hopfensperger, D.~J. Atkinson, and R.~A. Lakin, ``Stator-flux-oriented
  control of a doubly-fed induction machine with and without position
  encoder,'' \emph{IEE Proc. Electric Power Applications}, vol. 147, no.~4, pp.
  241--250, 2000.

\bibitem{Monroy08}
A.~Monroy, L.~{Alvarez-Icaza}, and G.~{Espinosa-Perez}, ``Passivity-based
  control for variable speed constant frequency operation of a {DFIG} wind
  turbine,'' \emph{International Journal of Control}, vol.~81, no.~9, pp.
  1399--1407, 2008.

\bibitem{Peresada04}
S.~Peresada, A.~Tilli, and A.~Tonielli, ``Power control of a doubly fed
  induction machine via output feedback,'' \emph{Control Engineering Practice},
  vol.~12, pp. 41--57, 2004.

\bibitem{Rabelo01}
B.~Rabelo and W.~Hofmann, ``Optimal active and reactive power control with the
  doubly-fed induction generator in the {MW}-class wind-turbines,'' in
  \emph{Proc. IEEE International Conference on Power Electronics and Drive
  Systems}, Bali, Indonesia, 2001, pp. 53--58.

\bibitem{LiH06}
H.~Li, Z.~Chen, and J.~K. Pedersen, ``Optimal power control strategy of
  maximizing wind energy tracking and conversion for {VSCF} doubly fed
  induction generator system,'' in \emph{Proc. CES/IEEE International Power
  Electronics and Motion Control Conference}, Shanghai, China, 2006, pp. 1--6.

\bibitem{Hansen04}
A.~D. Hansen, P.~Sorensen, F.~Iov, and F.~Blaabjerg, ``Control of variable
  speed wind turbines with doubly-fed induction generators,'' \emph{Wind
  Engineering}, vol.~28, no.~4, pp. 411--434, 2004.

\bibitem{Bose02}
B.~K. Bose, \emph{Modern Power Electronics and AC Drives}.\hskip 1em plus 0.5em
  minus 0.4em\relax Upper Saddle River, NJ: Prentice Hall PTR, 2002.

\bibitem{Fadaeinedjad08}
R.~Fadaeinedjad, M.~Moallem, and G.~Moschopoulos, ``Simulation of a wind
  turbine with doubly-fed induction generator by {FAST} and {S}imulink,''
  \emph{IEEE Transactions on Energy Conversion}, vol.~23, no.~2, pp. 690--700,
  2008.

\bibitem{LeiYZ06}
Y.~Lei, A.~Mullane, G.~Lightbody, and R.~Yacamini, ``Modeling of the wind
  turbine with a doubly fed induction generator for grid integration studies,''
  \emph{IEEE Transactions on Energy Conversion}, vol.~21, no.~1, pp. 257--264,
  2006.

\bibitem{Bianchi07}
F.~D. Bianchi, H.~{De Battista}, and R.~J. Mantz, \emph{Wind Turbine Control
  Systems: Principles, Modelling, and Gain Scheduling Design}.\hskip 1em plus
  0.5em minus 0.4em\relax London, England: Springer, 2007.

\bibitem{Heier98}
S.~Heier, \emph{Grid Integration of Wind Energy Conversion Systems}.\hskip 1em
  plus 0.5em minus 0.4em\relax New York, NY: John Wiley \& Sons, 1998.

\bibitem{Khalil01}
H.~K. Khalil, \emph{Nonlinear Systems}, 3rd~ed.\hskip 1em plus 0.5em minus
  0.4em\relax Upper Saddle River, NJ: Prentice Hall, 2001.

\bibitem{Chen99}
C.-T. Chen, \emph{Linear System Theory and Design}, 3rd~ed.\hskip 1em plus
  0.5em minus 0.4em\relax New York, NY: Oxford University Press, 1999.

\end{thebibliography}

\begin{IEEEbiographynophoto}{Choon Yik Tang} (S'97--M'04) received the B.S. and M.S. degrees in mechanical engineering from Oklahoma State University, Stillwater, in 1996 and 1997, respectively, and the Ph.D. degree in electrical engineering from the University of Michigan, Ann Arbor, in 2003. From 2003 to 2004, he was a Postdoctoral Research Fellow in the Department of Electrical Engineering and Computer Science at the University of Michigan. From 2004 to 2006, he was a Research Scientist at Honeywell Labs, Minneapolis. Since 2006, he has been an Assistant Professor in the School of Electrical and Computer Engineering at the University of Oklahoma, Norman. His current research interests include systems and control theory, distributed algorithms for computation and optimization over networks, and control and operation of wind farms.
\end{IEEEbiographynophoto}

\begin{IEEEbiographynophoto}{Yi Guo} (S'08) received the B.S. degree from Tianjin Polytechnic University, Tianjin, China, and the M.S. degree from Tianjin University, Tianjin, China, in 2002 and 2005, respectively. He is currently working toward his Ph.D. degree in the School of Electrical and Computer Engineering at the University of Oklahoma, Norman. His current research interests include control theory and applications, with an emphasis on control of wind turbines and wind farms.
\end{IEEEbiographynophoto}

\begin{IEEEbiographynophoto}{John N. Jiang} (SM'07) is an Assistant Professor in the Power System Group in the School of Electrical and Computer Engineering at the University of Oklahoma, Norman. He holds M.S. and Ph.D. degrees from the University of Texas at Austin. He has been involved in a number of wind energy related projects since 1989 in design, installation of stand-alone wind generation systems, the market impact of wind generation in Texas, and recent large-scale wind farms development in Oklahoma.
\end{IEEEbiographynophoto}

\end{document}